\documentclass[12pt]{amsart}
 
\usepackage{amsmath, amsfonts, amssymb, amsthm, cite, amscd, verbatim, dsfont}

\numberwithin{equation}{section}

\makeatletter
\newtheorem*{rep@theorem}{\rep@title}
\newcommand{\newreptheorem}[2]{%
\newenvironment{rep#1}[1]{%
 \def\rep@title{#2 \ref{##1}}%
 \begin{rep@theorem}}%
 {\end{rep@theorem}}}
 \makeatother

\theoremstyle{plain}
\newtheorem{theorem}{Theorem}[section]
\newreptheorem{theorem}{Theorem}
\newtheorem{lemma}[theorem]{Lemma}

\newtheorem{proposition}[theorem]{Proposition}
\theoremstyle{definition}
\newtheorem{definition}[theorem]{Definition}

\theoremstyle{remark}
\newtheorem*{remark}{Remark}

\newcommand{\Sp}{\operatorname{Sp}}
\newcommand{\SO}{\operatorname{SO}}

\begin{document}

\title[$U(\Omega,m)$ sets of a hyperelliptic curve]{A characterization of the $U(\Omega,m)$ sets of a hyperelliptic curve as $\Omega$ and $m$ vary}
\author{Christelle Vincent}
\address{Department of Mathematics and Statistics, University of Vermont, 16 Colchester Avenue, Burlington VT 05401}
\email{christelle.vincent@uvm.edu}

\begin{abstract}
In this article we consider a certain distinguished set $U(\Omega,m) \subseteq \{1,2,\ldots,2g+1,\infty\}$ that can be attached to a marked hyperelliptic curve of genus $g$ equipped with a small period matrix $\Omega$ for its polarized Jacobian. We show that as $\Omega$ and the marking $m$ vary, this set ranges over all possibilities prescribed by an argument of Poor.
\end{abstract}

\maketitle

\section{Introduction and Statement of Results}

Let $X$ be a hyperelliptic curve of genus $g$ defined over $\mathbb{C}$, and let $J(X)$ be its polarized Jacobian. In Definition \ref{def:periodmatrix}, we associate to $J(X)$ a \emph{small period matrix} $\Omega$, which is an element of the Siegel upper half-space $\mathbb{H}_g$ with the property that there is an isomorphism
\begin{equation}
J(X)(\mathbb{C}) \cong \mathbb{C}^g/L_\Omega,
\end{equation}
where $L_\Omega$ is the rank $2g$ lattice generated by the columns of $\Omega$ and the standard basis $\{e_i\}$ of $\mathbb{C}^g$.

After this choice we may define an analytic theta function 
\begin{equation}
\vartheta(z,\Omega) \colon \mathbb{C}^g \to \mathbb{C},
\end{equation}
whose exact definition is given in Definition \ref{def:thetafunction}. While this function is not well-defined on $J(X)(\mathbb{C})$, it is quasi-periodic with respect to the lattice $L_\Omega$, and so its zero set on the Jacobian is well-defined. In this article we study how a certain combinatorial characterization of this zero set depends on the choice of small period matrix (since the theta function itself depends on the small period matrix) and on a further choice we now make.

Since $X$ is hyperelliptic, there is a morphism $\pi \colon X \to \mathbb{P}^1$ of degree two, branched at $2g+2$ points. Suppose further that $X$ is given a \emph{marking of its branch points}, denoted $m$, by which we mean that the branch points of $\pi$ are numbered $1, 2,\ldots, 2g+1, \infty$. As we explain in Proposition \ref{prop:GBisom}, this choice gives a bijection between sets
\begin{equation}
S \subseteq \{1, 2,\ldots, 2g+1, \infty\}, \quad \# S \equiv 0 \pmod{2},
\end{equation}
up to the equivalence $S \sim S^c$, where $^c$ denotes taking the complement within $\{1, 2,\ldots, 2g+1, \infty\}$, and the two-torsion in $J(X)(\mathbb{C})$.

Then we have the following theorem, which we will repeat and make more precise in Theorem \ref{thm:vanishing}:
\begin{theorem}[Riemann Vanishing Theorem]
Let $X$ be a hyperelliptic curve, $m$ be a marking of its branch points, and let $\Omega$ be a small period matrix associated to its polarized Jacobian. Then there is a distinguished set $\Theta$ on $J(X)(\mathbb{C})$ (defined in Definition \ref{def:Thetaset}) and the zero set of the theta function $\vartheta(z,\Omega)$, considered as a subset of $J(X)(\mathbb{C})$, is exactly the set $\Theta$ translated by an element of the two-torsion of $J(X)$. 

Under the correspondence given above, this two-torsion point corresponds to a set which we denote $T(\Omega,m)$. Note that the set $T(\Omega,m)$ is only well-defined up to the equivalence $S \sim S^c$, where as before $^c$ denotes the complement.
\end{theorem}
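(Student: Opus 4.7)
The plan is to deduce the theorem from the general Riemann Vanishing Theorem for arbitrary compact Riemann surfaces, which asserts that for any small period matrix $\Omega$ the zero locus of $\vartheta(\cdot,\Omega)$ on $J(X)(\mathbb{C})$ coincides with $W_{g-1} + \kappa(\Omega)$, where $W_{g-1}$ is the image under the Abel--Jacobi map of the effective divisors of degree $g-1$ on $X$ and $\kappa(\Omega)$ is the Riemann constant attached to $\Omega$ (together with a choice of base point). I would take this classical statement as known, citing for instance Mumford's \emph{Tata Lectures on Theta}, and specialize it to the hyperelliptic situation.

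Next I would compare $W_{g-1} + \kappa(\Omega)$ with the distinguished set $\Theta$ defined in Definition \ref{def:Thetaset}. Presumably $\Theta$ is a translate of $W_{g-1}$ by a canonical divisor class on $X$ built from the branch points of $\pi$, so that $\Theta$ itself depends only on $X$ and not on $\Omega$ or $m$. The hyperelliptic identity $2P_i \sim 2P_j$ for any two branch points $P_i, P_j$ implies that any two divisor classes formed from integer linear combinations of branch points differ by a 2-torsion element of $J(X)$. Combined with the known closed-form expression for the Riemann constant in the hyperelliptic case, this yields that the difference between $\Theta$ and $W_{g-1} + \kappa(\Omega)$ is a 2-torsion point of $J(X)(\mathbb{C})$, as required.

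With this 2-torsion point in hand, Proposition \ref{prop:GBisom} converts it into an even-cardinality subset $T(\Omega,m) \subseteq \{1, 2,\ldots, 2g+1, \infty\}$, well-defined only up to complementation because the bijection of that proposition has this ambiguity built in. This would complete the proof.

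The main obstacle I expect is the careful bookkeeping required to track the dependence of $\kappa(\Omega)$ on both the small period matrix $\Omega$ (equivalently, the symplectic basis of $H_1(X,\mathbb{Z})$) and the marking $m$, and to translate this dependence into the combinatorial language of subsets of branch points via Proposition \ref{prop:GBisom}. The classical formula for $\kappa$ expresses it in terms of the diagonal of $\Omega$ and half-integer theta characteristics, and untangling these characteristics into an explicit subset of $\{1,\ldots,2g+1,\infty\}$ is where the real work lies; this is essentially what makes $T(\Omega,m)$ a nontrivial invariant worth studying in its own right.
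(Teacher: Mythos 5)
Your proposal is essentially the standard derivation: the paper itself offers no proof of this statement, attributing it directly to Theorem 5.3 of Mumford's \emph{Tata Lectures on Theta II}, and Mumford's argument is exactly the specialization you describe --- the general Riemann Vanishing Theorem identifying the zero locus with $W_{g-1}+\kappa(\Omega)$, followed by the observation that for a hyperelliptic curve with Weierstrass base point $P_\infty$ the Riemann constant is a half-period. The one place you could tighten the argument is the $2$-torsion claim: rather than invoking a closed form for $\kappa$, it follows at once from the general identity $2\kappa = -AJ\bigl(K_X-(2g-2)P_\infty\bigr)$ together with $K_X \sim (2g-2)P_\infty$ for a Weierstrass point of a hyperelliptic curve.
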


This theorem gives rise to the following distinguished set:

\begin{definition}\label{def:Uset}
Let $X$ be a hyperelliptic curve of genus $g$, $\Omega$ a choice of small period matrix associated to its Jacobian via the process described in Definition \ref{def:periodmatrix}, and $m$ a marking of the branch points of $X$. Let $U(\Omega,m) \subset \{1, 2,\ldots, 2g+1, \infty\}$ be defined up to the equivalence $S \sim S^c$ by the following formula:
\begin{equation}\label{eq:Uset}
U(\Omega,m) = 
\begin{cases}
T(\Omega,m) & \text{if $g$ is odd, and} \\
T(\Omega,m) \circ \{\infty\} & \text{if $g$ is even,}
\end{cases}
\end{equation}
where $\circ$ here denotes the symmetric difference of sets (see Definition \ref{def:symmetricdifference}). To fix one set in this equivalence class, we take $U(\Omega,m)$ to be the set containing $\infty$.
\end{definition}

\begin{remark}
We note that Mumford \cite{mumford2} adopts the opposite convention and chooses $U(\Omega,m)$ to be the member of the equivalence class that does not contain $\infty$. In this respect we follow the convention adopted by Poor \cite{poor}.
\end{remark}

The significance of this set $U(\Omega,m)$ is especially salient in computational applications; we invite the reader to consult Section \ref{sec:vanishing} for a further account of its role. This set first appeared in work of Mumford \cite{mumford2}, where given a marked hyperelliptic curve $X$, the author constructs a certain small period matrix $\Omega$ and computes the set $U(\Omega,m)$ explicitly. In this example, it is the case that 
\begin{equation}
\# U(\Omega,m) = g+1,
\end{equation}
where as before $g$ is the genus of the curve. In the theorems following this computation (in particular Mumford's version of Theorem \ref{thm:vanishingtheorem}, Theorem 9.1 of \cite{mumford2}, which is the most important of those from our point of view), the set $U(\Omega,m)$ is always assumed to have this cardinality.

However, in later work of Poor \cite{poor}, the same set $U(\Omega,m)$ is shown to have the property that
\begin{equation}
\# U(\Omega,m) \equiv g+1 \pmod{4}
\end{equation}
(see \cite[Proposition 1.4.9]{poor}). This raises the following interesting question: Does the set $U(\Omega,m)$ always have cardinality $g+1$, or do other cardinalities occur? We answer this question completely:

\begin{theorem}\label{thm:main}
Let $g \geq 1$ and $X$ be a hyperelliptic curve of genus $g$ defined over $\mathbb{C}$. Then for any set $U \subseteq  \{1, 2,\ldots, 2g+1, \infty\}$ containing $\infty$ such that 
\begin{equation}
\# U \equiv g+1 \pmod{4},
\end{equation}
there exists a small period matrix $\Omega$ associated to the Jacobian of $X$ via the process described in Definition \ref{def:periodmatrix}, and a marking $m$ of the branch points of $X$ such that 
\begin{equation}
U = U(\Omega,m).
\end{equation}
\end{theorem}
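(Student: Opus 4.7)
The plan is to combine two kinds of modifications to the pair $(\Omega, m)$: permuting the marking by an element of $S_{2g+2}$, and replacing $\Omega$ by $\gamma \cdot \Omega$ for some $\gamma \in \Sp(2g, \mathbb{Z})$. Permuting $m$ by $\sigma \in S_{2g+2}$ simply relabels the branch points, so the induced action on $U(\Omega, m)$ is the permutation $\sigma$ itself; in particular, this action is transitive on subsets of $\{1, 2, \ldots, 2g+1, \infty\}$ of a fixed cardinality containing $\infty$. Applying $\gamma$ to $\Omega$ moves it within its $\Sp(2g,\mathbb{Z})$-orbit in $\mathbb{H}_g$ and, via the classical theta transformation formula, modifies the theta characteristic underlying Riemann's vanishing theorem; through the Gauss--Bolza bijection of Proposition \ref{prop:GBisom}, this translates into a computable change in the set $T(\Omega, m)$, and hence in $U(\Omega, m)$.

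The starting point is Mumford's explicit computation in \cite{mumford2}, which for the curve $X$ produces a small period matrix $\Omega_0$ and marking $m_0$ with $\#U(\Omega_0, m_0) = g+1$. Permuting $m_0$ through the full symmetric group then realizes every subset of $\{1, 2, \ldots, 2g+1, \infty\}$ of cardinality $g+1$ containing $\infty$, settling the base case. For an arbitrary allowed cardinality $k \equiv g+1 \pmod 4$, the plan is to exhibit symplectic transformations whose effect is to replace $U(\Omega, m)$ by its symmetric difference with a carefully chosen subset $S$; Poor's congruence forces the resulting cardinality shift $\#S - 2\#(U \cap S)$ to be divisible by $4$, and one aims to arrange for this shift to equal $\pm 4$. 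Iterating such transformations, interspersed with permutations of the marking, should then reach every $U$ satisfying the congruence.

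The hard part will be producing a symplectic transformation that realizes a cardinality shift of exactly $\pm 4$. A natural candidate is a carefully chosen symplectic transvection, whose action on theta characteristics is computable through the transformation formula and can be translated back to the combinatorial language of branch-point subsets via the Gauss--Bolza bijection. Once a single cardinality-shifting operation is in place, the remainder of the proof (iteration, combination with the symmetric group action, and verification that the resulting $U$ lies in the correct equivalence class under $S \sim S^c$ after possibly taking a complement to ensure $\infty \in U$) is straightforward bookkeeping.
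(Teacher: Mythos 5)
Your proposal is an outline with the central step missing. The base case is fine (Mumford's $\Omega_0$ with $\#U(\Omega_0,m_0)=g+1$, plus permutations of the marking fixing $\infty$, which do act on $U$ by relabelling and are transitive on $(g+1)$-subsets containing $\infty$). But everything needed to leave cardinality $g+1$ is deferred: you do not exhibit a symplectic transformation realizing a cardinality shift of $\pm 4$, you do not compute the effect of any transvection on $U$ via the transformation formula, and you do not show that iterating such shifts, together with permutations, reaches every allowable set. That reachability statement \emph{is} the theorem, so labelling it ``the hard part'' and a ``natural candidate'' leaves the proof essentially where it started. One remark in your favor: the claim that $\gamma$ acts on $U$ by a symmetric difference $U\mapsto U\circ S_\gamma$ is in fact correct, because $\xi\mapsto e_*(\gamma^{-T}\xi)/e_*(\xi)$ is a homomorphism to $\{\pm 1\}$ (this is the additivity computation appearing in the paper's Lemma \ref{lem:Gamma12}); but you would then still have to determine which sets $S_\gamma$ arise, which amounts to classifying the even quadratic forms in the $\Sp_{2g}(\mathbb{F}_2)$-orbit of $e_*$ --- again the heart of the matter.

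For comparison, the paper sidesteps the entire orbit analysis with a counting argument and never needs to vary the marking. It shows (Proposition \ref{prop:Gamma12}) that $U(\gamma\cdot\Omega,m)=U(\Omega,m)$ exactly when $\gamma\in\Gamma_{1,2}$, so the number of distinct sets $U(\Omega,m)$ is the index $[\Sp_{2g}(\mathbb{Z}):\Gamma_{1,2}]=\#\Sp_{2g}(\mathbb{F}_2)/\#\SO_{2g}(\mathbb{F}_2,+1)=2^{g-1}(2^g+1)$, and separately (Proposition \ref{prop:Usetcount}) that the number of subsets $U\ni\infty$ with $\#U\equiv g+1\pmod 4$ is also $2^{g-1}(2^g+1)$ via a Pascal-identity induction. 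Since Poor's congruence confines every $U(\Omega,m)$ to the latter collection, equality of the two counts forces surjectivity. If you want to salvage your constructive route, the cleanest completion is to identify the allowable $U$-sets with the even quadratic forms on $(\tfrac12\mathbb{Z})^{2g}/\mathbb{Z}^{2g}$ and invoke transitivity of $\Sp_{2g}(\mathbb{F}_2)$ on forms of fixed Arf invariant; but that must actually be proved, not gestured at.
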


In other words, every possible set $U$ occurs as the set $U(\Omega,m)$ for a given hyperelliptic curve $X$, and Poor's characterization of $U(\Omega,m)$ is sharp.

\section*{Acknowledgments}

The author would like to thank Sorina Ionica for many discussions that led her to understand the set $U(\Omega,m)$ more deeply, and the referees for their careful reading of the manuscript and thoughtful comments.

\section{Preliminaries}\label{sec:preliminaries}

Let $X$ be a hyperelliptic curve, by which we mean a smooth complete curve of genus $g$ defined over $\mathbb{C}$ admitting a map $\pi \colon X \to \mathbb{P}^1$ of degree $2$. Throughout we denote its Jacobian variety by $J(X)$.

\subsection{The small period matrix of the Jacobian of a curve}\label{sec:periodmatrix}

We give here standard facts about abelian varieties and Jacobians. We refer the reader to \cite{Birkenhake} for further background and proofs.

We begin by giving an analytic space associated to polarized abelian varieties of dimension $g$:
\begin{definition}\label{def:siegelspace}
Let $g \geq 1$. The \emph{Siegel upper half-space} $\mathbb{H}_g$ is the set of symmetric $g \times g$ complex matrices $M$ such that the imaginary part of $M$ (obtained by taking the imaginary part of each entry in $M$) is positive definite. 
\end{definition}

Although much of the discussion below would apply to general polarized abelian varieties, in this article we focus our attention to Jacobians of curves equipped with their principal polarization. To simplify matters, at this time we restrict our attention to these objects. In this setting, the connection between this space and Jacobians is through the following object:

\begin{definition}\label{def:periodmatrix}
Let $X$ be a curve of genus $g$ defined over $\mathbb{C}$, and let $J(X)$ be its principally polarized Jacobian. To $J(X)$, we can associate matrices $\Omega \in \mathbb{H}_g$ in the following manner: Let $A_i$, $B_i$, $i = 1, \ldots, g$, be a basis for the homology group $H_1(J(X),\mathbb{Z})\cong H_1(X,\mathbb{Z})$, which is a $2g$-dimensional vector space over $\mathbb{C}$. Assume further that this basis is symplectic with respect to the cup product. There exists a unique basis $\omega_1, \omega_2, \ldots, \omega_g$ of $\Omega^1(J(X)) \cong \Omega^1(X)$, the space of holomorphic $1$-forms on $J(X)$ or $X$, such that
 \begin{equation}
\int_{B_i} \omega_j = \delta_{ij},
\end{equation}
where $\delta_{ij}$ is the Kronecker delta function. Then the matrix given by $\int_{A_i} \omega_j$ belongs to $\mathbb{H}_g$ and is called a \emph{small period matrix} for $J(X)$.
\end{definition}

Let $\Sp_{2g}(\mathbb{Z})$ be the group of $2g \times 2g$ matrices with coefficients in $\mathbb{Z}$ and symplectic with respect to the bilinear form given by the matrix
\begin{equation}
\begin{pmatrix}
0 & \mathds{1}_g \\ -\mathds{1}_g & 0
\end{pmatrix},
\end{equation}
where $\mathds{1}_g$ is the $g \times g$ identity matrix. We note that two elements of $\mathbb{H}_g$ can be associated to isomorphic polarized abelian varieties if and only if they differ by a matrix in $\Sp_{2g}(\mathbb{Z})$, where the action of $\Sp_{2g}(\mathbb{Z})$ on $\mathbb{H}_g$ is given in the following manner: Let 
\begin{equation}
\gamma = \begin{pmatrix} A & B \\ C & D \end{pmatrix} \in \Sp_{2g}(\mathbb{Z}),
\end{equation}
where $A$, $B$, $C$ and $D$ are four $g \times g$ matrices. Then
\begin{equation}\label{def:sympaction}
\gamma \cdot \Omega =  (A\Omega +B)(C \Omega + D)^{-1},
\end{equation}
where on the right multiplication and addition are the usual operation on $g \times g$ matrices.

We can further define an Abel-Jacobi map for a principally polarized Jacobian variety $J(X)$:
\begin{definition}\label{def:abeljacobi}
Let $X$ be a curve of genus $g$ defined over $\mathbb{C}$, let $J(X)$ be its principally polarized Jacobian, and fix $A_i$, $B_i$, $i = 1, \ldots, g$, a symplectic basis for the homology group $H_1(X,\mathbb{Z})$. Let $\omega_1, \omega_2, \ldots, \omega_g$ be the basis of $\Omega^1(X)$ described in Definition \ref{def:periodmatrix}, $\Omega$ be the small period matrix attached to $J(X)$ via this choice of symplectic basis for homology and let $L_\Omega$ be the rank $2g$ lattice generated by the columns of $\Omega$ and the standard basis $\{e_i\}$ of $\mathbb{C}^g$. Then there is an isomorphism called the \emph{Abel-Jacobi map}
\begin{equation}
AJ \colon J(X)  \to \mathbb{C}^g/L_\Omega,
\end{equation}
given by the map
\begin{equation}
D = \sum_{k=1}^s P_k - \sum_{k=1}^s Q_k  \mapsto \left(\sum_{k=1}^s \int_{Q_k}^{P_k} \omega_i\right)_i,
\end{equation}
where the $P_k$s and $Q_k$s are points on $X$. This map is well-defined since the value of each integral on $X$ is well-defined up to the value of integrating the differentials $\omega_i$ along the basis elements $A_i$, $B_i$, and thus up to elements of $L_\Omega$.
\end{definition}

We will in fact need a slightly modified version of this Abel-Jacobi map for our purposes:
\begin{definition}\label{def:abeljacobimod}
Let $X$ be a curve of genus $g$ defined over $\mathbb{C}$, let $J(X)$ be its principally polarized Jacobian, and fix $A_i$, $B_i$, $i = 1, \ldots, g$, a symplectic basis for the homology group $H_1(X,\mathbb{Z})$. Let $\Omega$ be the small period matrix attached to $J(X)$ via this choice of symplectic basis for homology and let $L_\Omega$ be the rank $2g$ lattice generated by the columns of $\Omega$ and the standard basis $\{e_i\}$ of $\mathbb{C}^g$. This gives rise to an isomorphism
\begin{equation}
\mathbb{C}^g/L_\Omega \to \mathbb{R}^{2g}/\mathbb{Z}^{2g},
\end{equation}
given by writing an element of $\mathbb{C}^g/L_\Omega$ as a linear combination of the columns of $\Omega$ and the standard basis $\{e_i\}$ of $\mathbb{C}^g$ and sending the element to the coefficients of the linear combination. Composing this isomorphism with the Abel-Jacobi map defined in Definition \ref{def:abeljacobi}, we obtain the \emph{modified Abel-Jacobi map}
\begin{equation}
AJ_c \colon J(X) \to \mathbb{R}^{2g}/\mathbb{Z}^{2g},
\end{equation}
which gives the coordinates of a point of $J(X)$ under the Abel-Jacobi map.
\end{definition}

In this paper we will need to know how a change of symplectic basis for $H_1(X,\mathbb{Z})$ affects the image of the Abel-Jacobi map and the coordinates of a point of $J(X)$ under the Abel-Jacobi map. We have
\begin{proposition}[adapted from Section 1.4 of \cite{poor}]\label{prop:sympaction}
Let $X$ be a curve of genus $g$ defined over $\mathbb{C}$, let $J(X)$ be its principally polarized Jacobian, and let $A_i$, $B_i$ be a symplectic basis for $H_1(X,\mathbb{Z})$ from which arises the small period matrix $\Omega$, the Abel-Jacobi map $AJ$ and the modified Abel-Jacobi map $AJ_c$. Let $\gamma \in \Sp_{2g}(\mathbb{Z})$ act on the elements $A_i$, $B_i$. Since $\Sp_{2g}(\mathbb{Z})$ preserves the cup pairing, the images $\tilde{A}_i$, $\tilde{B}_i$ gives rise to a second Abel-Jacobi map $\widetilde{AJ}$. If 
\begin{equation}
\gamma = \begin{pmatrix} A & B \\ C & D \end{pmatrix},
\end{equation}
where $A$, $B$, $C$ and $D$ are $g \times g$ matrices, then
\begin{equation}
\widetilde{AJ} = (C\Omega+ D)^{-T} AJ,
\end{equation}
where $M^{-T}$ is the inverse of the transpose of the matrix $M$. Furthermore, we have
\begin{equation}
\widetilde{AJ}_c = \gamma^{-T} AJ_c.
\end{equation}
\end{proposition}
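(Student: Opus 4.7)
The plan is to derive in turn (i) how the normalized holomorphic differentials transform under a symplectic change of homology basis, (ii) the first formula as an immediate consequence, and (iii) the second formula by passing to coordinates. Throughout I adopt the convention, implicit in the action $\gamma\cdot\Omega = (A\Omega+B)(C\Omega+D)^{-1}$ stated earlier, that $\gamma$ acts on the column vector $(A_1,\ldots,A_g,B_1,\ldots,B_g)^T$ of basis cycles, so $\tilde A_i = \sum_j(A_{ij}A_j+B_{ij}B_j)$ and $\tilde B_i = \sum_j(C_{ij}A_j+D_{ij}B_j)$.

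\emph{Step 1: transformation of the differentials and the first formula.} Writing the new normalized basis as $\tilde\omega_j = \sum_k M_{kj}\omega_k$ and imposing $\int_{\tilde B_i}\tilde\omega_j = \delta_{ij}$, the periods $\int_{A_l}\omega_k = \Omega_{lk}$ and $\int_{B_l}\omega_k = \delta_{lk}$ convert the normalization into the matrix equation $(C\Omega+D)M = I$, so $M = (C\Omega+D)^{-1}$. Substituting into the defining integral for $\widetilde{AJ}$ and using linearity yields the first formula $\widetilde{AJ} = M^T AJ = (C\Omega+D)^{-T}AJ$.

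\emph{Step 2: the modified map.} If $AJ_c(D) = (c_1,c_2)^T$ with $c_1,c_2 \in \mathbb{R}^g$, then by the definition of the modified Abel--Jacobi map $AJ(D) \equiv \Omega c_1+c_2 \pmod{L_\Omega}$, and analogously for $\widetilde{AJ}$ and $L_{\widetilde\Omega}$. From $\gamma^T J\gamma = J$ one computes $\gamma^{-T} = \begin{pmatrix} D & -C \\ -B & A \end{pmatrix}$, so the claim $\widetilde{AJ}_c = \gamma^{-T}AJ_c$ is equivalent to the pair of matrix identities
\[
\widetilde\Omega D - B = (C\Omega+D)^{-T}\Omega, \qquad A - \widetilde\Omega C = (C\Omega+D)^{-T}.
\]
Passing to equalities on the nose (rather than merely modulo $L_{\widetilde\Omega}$) is legitimate because positive-definiteness of $\operatorname{Im}\widetilde\Omega$ makes the map $\mathbb{R}^{2g}/\mathbb{Z}^{2g}\to\mathbb{C}^g/L_{\widetilde\Omega}$ injective.

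\emph{Where the real work lies.} Each of the two identities above yields to the same tactic: multiply through by $(C\Omega+D)^T$ on the left, use the defining relation $\widetilde\Omega(C\Omega+D) = A\Omega+B$ together with its transpose $(C\Omega+D)^T\widetilde\Omega = (A\Omega+B)^T$ (valid because $\widetilde\Omega$ is symmetric), expand, and apply the symplectic relations $A^TC = C^TA$, $B^TD = D^TB$, and $A^TD - C^TB = I$; for instance the second identity collapses to $\Omega(C^TA - A^TC) + (D^TA - B^TC) = I$, which is then immediate. I expect the main obstacle to be bookkeeping rather than conceptual difficulty: one must commit to a single convention for how $\Sp_{2g}(\mathbb{Z})$ acts on the homology basis, use the symmetry of both $\Omega$ and $\widetilde\Omega$ essentially to pass between a formula and its transpose, and track transposes with care throughout.
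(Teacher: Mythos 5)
Your proof is correct. Note that the paper itself gives no proof of this proposition --- it is imported from Section 1.4 of Poor's work --- so there is nothing to compare against; your direct verification is the standard argument and it checks out: the normalization $\bigl((C\Omega+D)M\bigr)_{ij}=\delta_{ij}$ gives $M=(C\Omega+D)^{-1}$ and hence the first formula, your expression for $\gamma^{-T}$ is right, and both matrix identities do collapse to the symplectic relations $A^TC=C^TA$, $B^TD=D^TB$, $D^TA-B^TC=I$ exactly as you describe (using the symmetry of $\Omega$ and $\widetilde\Omega$). One small remark on logic: the sentence about ``passing to equalities on the nose'' is phrased as if you were upgrading a congruence to an equality, but what your argument actually does (and all it needs to do) is prove the two identities exactly, which then exhibits $\gamma^{-T}AJ_c(D)$ as a coordinate vector for $\widetilde{AJ}(D)$; the invertibility of $\operatorname{Im}\widetilde\Omega$ is only needed to say coordinates modulo $\mathbb{Z}^{2g}$ are unique. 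Your pinning of the convention for the action on cycles by requiring consistency with $\widetilde\Omega=\gamma\cdot\Omega$ is the right move, since the paper leaves that convention implicit.
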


\subsection{The two-torsion on the Jacobian of a hyperelliptic curve}

We now turn our attention to the two-torsion of the Jacobian of a hyperelliptic curve of genus $g$ defined over $\mathbb{C}$. As a group, it is isomorphic to $C_2^{2g}$, where $C_2$ is the cyclic group with two elements.

Throughout, let $B = \{1,2, \ldots, 2g+1,\infty\}$. When $S \subseteq B$, we let $S^c$ be the complement of $S$ in $B$.

\begin{definition}\label{def:symmetricdifference}
Let $S_1$ and $S_2$ be any two subsets of $B$. We define
\begin{equation}
S_1 \circ S_2 = (S_1 \cup S_2) - (S_1 \cap S_2),
\end{equation}
the \emph{symmetric difference} of $S_1$ and $S_2$.
\end{definition}

This binary operation on subsets in turns gives rise to the following group:
\begin{proposition}\label{prop:defGB}
The set 
\begin{equation}
\{S \subseteq B : \# S \equiv 0 \pmod{2} \} / \{S \sim S^c\}
\end{equation}
is a commutative group under the operation $\circ$, of order $2^{2g}$, with identity $\emptyset \sim B$. Since $S \circ S = \emptyset$ for all $S \subseteq B$, this is a group of exponent $2$. Therefore this group, which we denote $G_B$, is isomorphic to $C_2^{2g}$.
\end{proposition}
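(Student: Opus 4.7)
The plan is to realize the whole structure inside the $\mathbb{F}_2$-vector space $(2^B,\circ)$, where $2^B$ is the full power set of $B$. Under symmetric difference, $2^B$ is an abelian group in which every element is its own inverse, with identity $\emptyset$; choosing the singletons $\{b\}$, $b\in B$, as a basis identifies it with $\mathbb{F}_2^{2g+2}$. Commutativity and exponent $2$ for anything we build as a subquotient will be automatic.

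First I would show that $H := \{S\subseteq B : \#S\equiv 0\pmod 2\}$ is a subgroup of $(2^B,\circ)$. This uses the elementary identity
\begin{equation}
\#(S_1\circ S_2) = \#S_1 + \#S_2 - 2\,\#(S_1\cap S_2),
\end{equation}
which shows that $H$ is closed under $\circ$; it clearly contains $\emptyset$ and is closed under inverses. The map $2^B\to \mathbb{F}_2$, $S\mapsto \#S\bmod 2$, is a surjective homomorphism whose kernel is $H$, so $H$ has index $2$ and order $2^{2g+1}$.

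Next I would examine the equivalence $S\sim S^c$. A direct computation gives $S\circ B = B\setminus S = S^c$, so the equivalence is precisely the partition of $2^B$ into cosets of the subgroup $N := \{\emptyset,B\}$. Because $\#B = 2g+2$ is even, $B\in H$, so $N\subseteq H$ and the equivalence restricts to $H$; moreover $S\in H\iff S^c\in H$, so the quotient set in the proposition is exactly the quotient group $H/N$. Since $H/N$ has order $2^{2g+1}/2 = 2^{2g}$ and the operation $\circ$ descends (the identity becoming the class $[\emptyset] = \{\emptyset,B\}$), this gives the group $G_B$ of order $2^{2g}$ asserted in the statement.

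Finally, the relation $S\circ S = \emptyset$ in $2^B$ passes to the quotient, so every element of $G_B$ has order dividing $2$. A finite abelian group of exponent $2$ is an $\mathbb{F}_2$-vector space, hence isomorphic to $C_2^n$ where $n$ is its dimension; matching orders forces $n=2g$, yielding $G_B \cong C_2^{2g}$. There is no real obstacle here: the only point that requires a moment's care is checking that the equivalence $S\sim S^c$ is well-defined on $H$ and coincides with cosets of $N$, which rests on the parity of $\#B$ and the identity $S^c = S\circ B$.
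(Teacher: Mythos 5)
Your proof is correct and complete; the paper states this proposition without proof, treating it as standard, and your argument (realizing the quotient as $H/N$ with $H$ the index-two parity subgroup of $(2^B,\circ)\cong\mathbb{F}_2^{2g+2}$ and $N=\{\emptyset,B\}$, using $S^c=S\circ B$ and the evenness of $\#B$) is exactly the standard verification being implicitly invoked. No gaps.
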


If the hyperelliptic curve $X$ is equipped with a marking of its branch points (recall that this means that we label the $2g+2$ branch points of the degree two map $\pi \colon X \to \mathbb{P}^1$, $P_1, P_2, \ldots, P_{2g+1}, P_\infty$), there is in fact an explicit isomorphism between $G_B$ and $J(X)[2]$, the two-torsion on the Jacobian of $X$:

\begin{proposition}[Corollary 2.11 of \cite{mumford2}]\label{prop:GBisom}
To each set $S \subseteq B$ such that $\# S \equiv 0 \pmod{2}$, associate the divisor class of the divisor
\begin{equation}\label{eq:2torsion}
e_S = \sum_{i \in S} P_i - (\#S) P_{\infty}.
\end{equation}
This association is a group isomorphism between $J(X)[2]$ and $G_B$.
\end{proposition}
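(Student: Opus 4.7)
The plan is to check that $\phi \colon S \mapsto [e_S]$ is a well-defined group homomorphism $G_B \to J(X)[2]$ and then argue it is an isomorphism by a cardinality count, since both sides are elementary abelian $2$-groups of order $2^{2g}$. The three routine properties---landing in $J(X)[2]$, the homomorphism property, and well-definedness modulo $S \sim S^c$---all reduce to two elementary computations on an affine model $y^2 = \prod_{j=1}^{2g+1}(x - a_j)$ of $X$: the function $x - a_i$ has divisor $2P_i - 2P_\infty$, so each class $[P_i - P_\infty]$ is $2$-torsion; and the function $y$ has divisor $\sum_{i=1}^{2g+1} P_i - (2g+1)P_\infty = e_B$, so $[e_B] = 0$. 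From the first of these, writing $e_S = \sum_{i \in S \setminus \{\infty\}}(P_i - P_\infty)$ shows $[e_S] \in J(X)[2]$, and combining this with the elementary set-theoretic identity $e_{S_1} + e_{S_2} = e_{S_1 \circ S_2} + 2\,e_{S_1 \cap S_2}$ proves the homomorphism property. Well-definedness under $S \sim S^c$ then follows from $e_S + e_{S^c} = e_B$, whence $[e_{S^c}] = -[e_S] = [e_S]$ by $2$-torsion.

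The main obstacle is injectivity. Suppose $[e_S] = 0$; after possibly replacing $S$ by $S^c$ we may assume $\infty \notin S$, so that $e_S = (f)$ for some $f \in \mathbb{C}(X)^{\times}$ whose only pole is at $P_\infty$. Decomposing $f = g(x) + y\,h(x)$ under the hyperelliptic involution $\sigma$, the pole structure forces $g, h \in \mathbb{C}[x]$ (as $g = (f + \sigma^* f)/2$ and $yh = (f - \sigma^* f)/2$ inherit from $f$ the property of having no finite poles). The key device is the norm
\[
N(f) = f \cdot \sigma^* f = g(x)^2 - f_0(x)\,h(x)^2 \in \mathbb{C}[x],
\]
where $f_0(x) = \prod_{j=1}^{2g+1}(x - a_j)$. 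Since the support of $e_S$ is fixed pointwise by $\sigma$, the divisor of $N(f)$ on $X$ equals $2e_S$, which forces $N(f) = c\prod_{i \in S}(x - a_i)$ for some nonzero constant $c$.

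To extract a contradiction unless $S = \emptyset$, the plan is to compare degrees of $g$ in two ways. A pole-order analysis at $P_\infty$ gives $\#S = \max(2\deg g,\, 2\deg h + 2g+1)$, and since $\#S$ is even while $2\deg h + 2g+1$ is odd, the maximum must be $2\deg g$, so $g \neq 0$ and $\deg g = \#S/2$. On the other hand, substituting $x = a_i$ for each $i \in S$ into the polynomial identity $g(x)^2 - f_0(x)h(x)^2 = c\prod_{j \in S}(x - a_j)$ yields $g(a_i) = 0$, whence $\prod_{i \in S}(x - a_i) \mid g$ as polynomials, giving $\deg g \geq \#S$. These two estimates are compatible only when $\#S = 0$. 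Thus $\ker \phi = 0$, and the order count completes the proof.
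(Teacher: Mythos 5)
Your proof is correct. The paper does not prove this proposition at all---it is quoted directly as Corollary 2.11 of Mumford's Tata Lectures on Theta II---so there is no internal argument to compare against; your argument (the symmetric-difference identity $e_{S_1}+e_{S_2}=e_{S_1\circ S_2}+2e_{S_1\cap S_2}$ together with the principal divisors of $x-a_i$ and $y$ for the homomorphism and well-definedness, then the norm $f\cdot\sigma^*f=g^2-f_0h^2$ and the degree comparison $\deg g=\#S/2$ versus $\prod_{i\in S}(x-a_i)\mid g$ for injectivity, finished by the order count $2^{2g}$ on both sides) is essentially the standard proof found in that source, and all the steps check out.
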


We may now compose the isomorphism of Proposition \ref{prop:GBisom} with the modified Abel-Jacobi map given in Definition \ref{def:abeljacobimod}.
\begin{definition}\label{def:etamap}
We denote by $\eta_{\Omega,m}$ the isomorphism
\begin{equation}
\eta_{\Omega,m} \colon \{S \subseteq B : \# S \equiv 0 \pmod{2} \} / \{S \sim S^c\} \to (\frac{1}{2}\mathbb{Z})^{2g}/\mathbb{Z}^{2g}
\end{equation}
given by composing the isomorphism $G_B \to J(X)[2]$ given in Proposition \ref{prop:GBisom} and the map $AJ_c$ given in Definition \ref{def:abeljacobimod}.
\end{definition}

\begin{remark}
We note that in Poor's work \cite{poor}, this is the \emph{class} of the map $\eta$, which is an equivalence class of maps to $(\frac{1}{2}\mathbb{Z})^{2g}$. In this work we will not need the distinction between the ``true" $\eta$-map and its class, and therefore by a slight abuse of notation we consider the map above to be the $\eta$-map.
\end{remark}

This map $\eta_{\Omega,m}$ will allow us to give a more concrete definition of the set $U(\Omega,m)$, which we will use in our proof in Section \ref{sec:proof}. We first need one more notion.

\begin{definition}\label{def:estar}
If $x \in \mathbb{C}^{2g}$, let $x = (x_1, x_2)$, with $x_i \in \mathbb{C}^g$; in other words let $x_1$ denote the vector of the first $g$ entries of $x$, and $x_2$ denote the vector of the last $g$ entries of $x$. Furthermore, for $x_i \in \mathbb{C}^g$, let $x_i^T$ denote the transpose of $x_i$. Then for $\xi \in (\frac{1}{2}\mathbb{Z})^{2g}$, we define
\begin{equation}
e_*(\xi) = \exp(4\pi i \xi_1^T  \xi_2)
\end{equation}
to be the \emph{parity} of $\xi$. Note that $e_*$ is also well-defined on $(\frac{1}{2}\mathbb{Z})^{2g}/\mathbb{Z}^{2g}$.
\end{definition}

\begin{proposition}\label{prop:poorsdescr}[Lemma 1.4.13 of \cite{poor}]
Let $X$ be a hyperelliptic curve of genus $g$ equipped with a marking $m$ of its branch points, and let $J(X)$ be equipped with a choice of small period matrix $\Omega$ via the process described in Definition \ref{def:periodmatrix}. Then the set $U(\Omega,m)$ of Definition \ref{def:Uset} is given by
\begin{equation}
\{i \in  \{1, 2, \ldots, 2g+1\} : e_*(\eta_{\Omega,m}(\{i, \infty\})) = -1 \} \cup \{\infty \}.
\end{equation}
\end{proposition}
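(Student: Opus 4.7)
The plan is to reduce the statement to a combinatorial formula for the parity $e_*(\eta_{\Omega,m}(S))$ in terms of cardinalities involving the set $U(\Omega,m)$, and then specialize that formula to $S = \{i, \infty\}$.

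Specifically, I would first establish a Mumford-style identity
\[
e_*(\eta_{\Omega,m}(S)) = (-1)^{(\#(S \circ U(\Omega,m)) - g - 1)/2}
\]
for every $S \subseteq B$ with $\#S$ even. That the exponent is an integer uses the cardinality congruence $\#U(\Omega,m) \equiv g+1 \pmod{4}$ recalled in the introduction. To prove the identity, I would combine the Riemann Vanishing Theorem with the transformation formula for theta functions with half-integer characteristics: the parity $e_*(\xi)$ controls whether the theta function with characteristic $\xi$ is even or odd in $z$, and the Riemann Vanishing Theorem pins down which two-torsion points lie on the zero locus of $\vartheta(z,\Omega)$ in terms of the set $T(\Omega,m)$. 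Translating between $T(\Omega,m)$ and $U(\Omega,m)$ using Definition \ref{def:Uset}, and using the hyperelliptic description of $\Theta$ via $(g-1)$-fold sums of differences $P_i - P_\infty$, then converts this parity statement into the displayed symmetric-difference formula.

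With the formula in hand, the proposition follows from a short computation applied to $S = \{i, \infty\}$. Under the convention that $\infty \in U(\Omega,m)$, one has $\#(\{i,\infty\} \circ U(\Omega,m)) = \#U(\Omega,m) - 2$ when $i \in U(\Omega,m)$ and $\#(\{i,\infty\} \circ U(\Omega,m)) = \#U(\Omega,m)$ otherwise. Writing $\#U(\Omega,m) = g+1 + 4k$, the exponent $(\#(\{i,\infty\} \circ U(\Omega,m)) - g - 1)/2$ equals $2k-1$ in the first case and $2k$ in the second, and therefore $e_*(\eta_{\Omega,m}(\{i,\infty\})) = -1$ if and only if $i \in U(\Omega,m)$, as claimed.

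The main obstacle is establishing the Mumford-style formula in the second paragraph; this requires careful tracking of theta characteristics under translation by two-torsion, married to the hyperelliptic-specific geometric content of the Riemann Vanishing Theorem. The concluding parity check is then essentially a one-line case analysis.
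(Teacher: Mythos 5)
The paper offers no proof of this proposition: it is imported verbatim as Lemma 1.4.13 of \cite{poor} and used as a black box, so there is no internal argument to measure yours against. Taken on its own terms, your strategy --- first establish $e_*(\eta_{\Omega,m}(S)) = (-1)^{(\#(S\circ U(\Omega,m))-g-1)/2}$ for all even $S$, then specialize to $S=\{i,\infty\}$ --- is a legitimate route (it is essentially how Mumford organizes Proposition 6.2 and Theorem 9.1 of \cite{mumford2}), the identity is true, and your concluding case analysis is correct. Two minor remarks: integrality of the exponent needs only $\#U(\Omega,m)\equiv g+1\pmod 2$, and one should also check that the exponent is well defined modulo $2$ under $S\sim S^c$ (it is, since $\#(S\circ U)$ and its complementary count differ by $2g+2-2\#(S\circ U)$, and $\#(S\circ U)\equiv g+1 \pmod 2$).

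The genuine gap is in your second paragraph, exactly where you locate the ``main obstacle.'' The Riemann Vanishing Theorem as stated in this paper is purely set-theoretic: it identifies the zero locus of $\vartheta$ with $\Theta+e_{T(\Omega,m)}$. Combined with the fact that $e_*(\xi)$ is the parity of $\vartheta[\xi]$ in $z$, this only yields $e_*(\eta_{\Omega,m}(S))=+1$ for those $S$ with $\vartheta[\eta_{\Omega,m}(S)](0,\Omega)\neq 0$ (an odd function must vanish at the origin); for the $S$ whose theta constant vanishes it gives no information about the sign. To extract the parity in the vanishing cases you need the \emph{order} of vanishing, i.e.\ the Riemann Singularity Theorem (the multiplicity of $\Theta$ at a point equals $h^0$ of the corresponding divisor class) together with the explicit hyperelliptic computation of $h^0$ for divisors supported on branch points, which shows that $\vartheta[\eta_{\Omega,m}(S)]$ vanishes at $0$ to order $\tfrac{1}{2}\left|g+1-\#(S\circ U(\Omega,m))\right|$ and hence that $e_*$ is $(-1)$ raised to that power. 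Neither ingredient appears in your sketch, so the key identity is not actually established. (Poor's own proof of Lemma 1.4.13 instead exploits the azygetic structure of the characteristics $\eta_{\Omega,m}(\{i,\infty\})$; either route can be made to work, but both require substantially more than the vanishing theorem plus the transformation formula.)
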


In other words, if we consider the distinguished elements $D_i = P_i-P_{\infty} \in J(X)[2]$ for $i = 1, 2, \ldots, 2g+1, \infty$, the set $U(\Omega,m)$ can be made to contain $\infty$ as well as $i$ such that the coordinates of $D_i$ under the Abel-Jacobi map are odd, for $i = 1, 2, \ldots, 2g+1$.

\subsection{Mumford and Poor's vanishing theorem}\label{sec:vanishing}

We now turn our attention to explaining the significance of the set $U(\Omega,m)$. As we explained briefly in the introduction, the set connects the vanishing set of an analytic theta function to a distinguished divisor $\Theta$ on the Jacobian $J(X)$ of a marked hyperelliptic curve $X$.

We begin by defining this divisor:
\begin{definition}\label{def:Thetaset}
Let $X$ be a curve of genus $g$ defined over $\mathbb{C}$ and $P_\infty$ be a basepoint on $X$. Then we define the \emph{theta divisor} $\Theta$ on $J(X)$ to be the subset of divisor classes of the form
\begin{equation}
\sum_{i=1}^{g-1} Q_i - (g-1)P_{\infty}.
\end{equation} 
Note that if $X$ is a marked hyperelliptic curve and we choose $P_{\infty}$ to be the branch point of $X$ labeled $\infty$, this gives a unique choice of theta divisor on $J(X)$. We therefore call it ``the" theta divisor on the marked curve $X$.
\end{definition}

We now define the theta function whose zeroes we will study:

\begin{definition}\label{def:thetafunction}
For $z \in \mathbb{C}^g$ and $\Omega \in \mathbb{H}_g$, we define the \emph{theta function}
\begin{equation}
\vartheta(z, \Omega) = \sum_{n \in \mathbb{Z}^{g}}\exp(\pi i n^T \Omega n + 2 \pi i n^ T z).
\end{equation}
\end{definition}

\begin{remark}
As noted in the introduction, this function is quasi-periodic for the lattice $L_{\Omega}$ in the coordinate $z$. Indeed, if $k \in  \mathbb{Z}^{g}$, by \cite[p. 120]{mumford1}, we have
\begin{equation}
\vartheta(z+k,\Omega) = \vartheta(z,\Omega)
\end{equation}
and
\begin{equation}
\vartheta(z+\Omega k,\Omega) = \exp(-i\pi k^T\Omega k - 2\pi i k^Tz) \vartheta(z,\Omega).
\end{equation}
However, since the automorphy factor is non-zero, the zero set of $\vartheta$ is well-defined as a subset of $\mathbb{C}^g/L_{\Omega}$.
\end{remark}

For the convenience of the reader, we repeat the Riemann Vanishing Theorem now that all terms have been defined:
\begin{theorem}[Riemann Vanishing Theorem, or Theorem 5.3 of \cite{mumford2}]\label{thm:vanishing}
Let $X$ be a hyperelliptic curve, $m$ be a marking of its branch points, 
and let $\Omega$ be a small period matrix associated to its Jacobian via the process described in Definition \ref{def:periodmatrix}. If $\Theta \in J(X)$ is as in Definition \ref{def:Thetaset}, then  the zero set of the theta function $\vartheta(z,\Omega)$, considered as a subset of $J(X)(\mathbb{C})$ is a translate of $\Theta$ by a two-torsion point of $J(X)$.
\end{theorem}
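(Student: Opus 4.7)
The plan is to decompose the theorem into two parts: (a) the zero set $Z(\vartheta) \subset J(X)(\mathbb{C})$ is a translate of the theta divisor $\Theta$ by some point $\Delta \in J(X)$; (b) this translation point $\Delta$ lies in $J(X)[2]$. Part (a) is the classical Riemann vanishing theorem for an arbitrary curve of genus $g$, while part (b) is what uses the hyperelliptic marking in an essential way.

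For part (a) I would follow the classical route. The quasi-periodicity relations displayed after Definition \ref{def:thetafunction} show that the analytic divisor of $\vartheta(z,\Omega)$ on $\mathbb{C}^g$ descends to a well-defined effective divisor $Z(\vartheta)$ on $J(X)(\mathbb{C}) \cong \mathbb{C}^g/L_\Omega$, and the first Chern class of the associated line bundle, read off from the automorphy factor $\exp(-i\pi k^T \Omega k - 2\pi i k^T z)$, is exactly the class of the principal polarization on $J(X)$. Next one invokes Riemann's computation: choosing $g-1$ generic points $Q_1,\ldots,Q_{g-1}$ on $X$, one integrates $d\log\vartheta$ around a fundamental polygon for $X$ (and uses Abel's theorem as implicit in Definition \ref{def:abeljacobi}) to locate the zeros explicitly, producing a vector $\Delta \in J(X)$ (the Riemann constant for basepoint $P_\infty$) such that $\vartheta(AJ(D - (g-1)P_\infty) - \Delta, \Omega) = 0$ whenever $D$ is an effective divisor of degree $g-1$ on $X$. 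Since $Z(\vartheta)$ and $\Theta + \Delta$ are both effective divisors representing the same principal polarization, and one set-theoretically contains the other, they are equal as divisors on $J(X)$.

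For part (b), the key ingredients are two symmetries. First, $\vartheta$ is even: replacing $n$ by $-n$ in the defining series gives $\vartheta(-z,\Omega) = \vartheta(z,\Omega)$, so $Z(\vartheta)$ is invariant under $z \mapsto -z$ on $J(X)$. Second, the theta divisor $\Theta$ is itself invariant under negation in $J(X)$ precisely because $P_\infty$ is a Weierstrass point of $X$: the hyperelliptic involution $\sigma$ satisfies $Q + \sigma(Q) \sim 2P_\infty$ for every $Q \in X$, so for any effective divisor of degree $g-1$,
$$-\Bigl(\sum_{i=1}^{g-1} Q_i - (g-1)P_\infty\Bigr) \sim \sum_{i=1}^{g-1} \sigma(Q_i) - (g-1)P_\infty \in \Theta.$$
Combining these two symmetries with $Z(\vartheta) = \Theta + \Delta$ gives $\Theta + \Delta = -(\Theta + \Delta) = \Theta - \Delta$, hence $2\Delta = 0$ in $J(X)$, as required.

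The main obstacle is part (a), specifically the explicit identification of the zero locus of $\vartheta$ with the image under the Abel-Jacobi map of the effective degree-$(g-1)$ divisors on $X$; this is a well-known but genuinely nontrivial calculation (see \cite{mumford1} and \cite{mumford2}), and in our exposition we would cite it rather than reproduce it in full. Part (b), by contrast, is a short formal consequence of the evenness of the analytic theta function together with the symmetry of $\Theta$ provided by the hyperelliptic involution on $X$.
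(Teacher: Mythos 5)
The paper does not actually prove this statement: it is quoted verbatim as Theorem 5.3 of \cite{mumford2}, so the only ``proof'' in the paper is that citation. Your sketch is a correct outline of the standard argument and is consistent with what is being cited; the decomposition into (a) the classical Riemann vanishing theorem producing $Z(\vartheta)=\Theta+\Delta$ for the Riemann constant $\Delta$, and (b) the two-torsion claim, is exactly where the hyperelliptic hypothesis enters. Two small points are worth making explicit. First, in part (b) the step from $\Theta+\Delta=\Theta-\Delta$ to $2\Delta=0$ uses that the translation stabilizer of $\Theta$ is trivial, which holds because $\Theta$ induces the \emph{principal} polarization (equivalently, $a\mapsto t_a^*\Theta-\Theta$ is injective); this should be said. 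Second, Mumford's own route to $2\Delta=0$ is slightly different: he uses the general identity $2\Delta = -AJ(K_X-(2g-2)P_\infty)$ relating the Riemann constant to the canonical class, which vanishes here because $K_X\sim(2g-2)P_\infty$ when $P_\infty$ is a Weierstrass point. Your derivation from the evenness of $\vartheta$ together with the $\sigma$-symmetry of $\Theta$ is a legitimate and arguably more elementary alternative, at the cost of the stabilizer argument just mentioned; both ultimately rest on $P_\infty$ being a branch point, which the marking guarantees. As long as the genuinely hard analytic input (Riemann's identification of $Z(\vartheta)$ with the Abel--Jacobi image of effective degree-$(g-1)$ divisors) is cited rather than claimed, your write-up would serve as a correct proof sketch of the theorem as stated.
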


From the introduction, we recall that this gives rise to the set $U(\Omega,m)$ in the following manner: Given a marking $m$ and a small period matrix $\Omega$, the Riemann Vanishing Theorem singles out a divisor on the Jacobian of $X$ (the zero locus of the function $\vartheta$). As this is a translate of $\Theta$ by a two-torsion point, this gives in turn a distinguished two-torsion point on $J(X)$. Recall that Proposition \ref{prop:GBisom} gives an isomorphism between the group $G_B$ defined in Proposition \ref{prop:defGB} and the two-torsion of $J(X)$. Therefore, via this isomorphism, we obtain an element of the group $G_B$. Finally, since the elements of $G_B$ are equivalence classes of certain subsets of $B$ (where the equivalence consists in taking the complement in $B = \{1,2, \ldots, 2g+1,\infty\}$), we obtain a certain (equivalence class of) subset of $B$, which we denote by $T(\Omega,m)$ here.

We then define the set $U(\Omega,m)$ to be the element of the equivalence class of
\begin{equation}
\begin{cases}
T(\Omega,m) & \text{if $g$ is odd, and} \\
T(\Omega,m) \circ \{\infty\} & \text{if $g$ is even}
\end{cases}
\end{equation}
that contains $\infty$, as noted in Definition \ref{def:Uset}.

This definition is motivated by the proof of Proposition 6.2 of \cite{mumford2}: Under the correspondence given in part a) of this Proposition, the set $T(\Omega,m)$ when $g$ is odd, or $T(\Omega,m) \circ \{\infty\}$ when $g$ is even, corresponds to the translate $\Theta + e_{T(\Omega,m)}$ and to the characteristic $\delta + \eta_{T(\Omega,m)}$ (in our notation $\eta_{T(\Omega,m)}$ is $\eta_{\Omega,m}(T(\Omega,m))$). Since $\eta_{T(\Omega,m)}=\delta$ and $\delta \in \frac{1}{2} L_{\Omega}$, $T(\Omega,m)$ when $g$ is odd, or $T(\Omega,m) \circ \{\infty\}$ when $g$ is even, corresponds to $0$ and is therefore the set $U(\Omega,m)$ defined here.

We end by giving part of the Vanishing Criterion for hyperelliptic small period matrices, which highlights how truly central the set $U(\Omega,m)$ is to the computational theory of hyperelliptic curves.
\begin{theorem}[Main Theorem 2.6.1 of \cite{poor}]\label{thm:vanishingtheorem}
Let $X$ be a hyperelliptic curve of genus $g$, with a marking of its branch points $m$ and let $\Omega$ be a small period matrix associated to its Jacobian $J(X)$ via the process described in Definition \ref{def:periodmatrix}. Then for $S \subseteq B$ with $ \#S\equiv 0 \pmod{2}$, we have
\begin{equation}\label{VanishingCondition}
\vartheta(AJ(e_S),\Omega) = 0 
\end{equation}
if and only if
\begin{equation}
 \#(S\circ U(\Omega,m)) \neq g+1.
 \end{equation}
\end{theorem}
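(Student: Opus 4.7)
The plan is to hold the marking $m_0$ fixed, start from Mumford's reference small period matrix $\Omega_0$ (yielding $U_0 := U(\Omega_0, m_0)$ of cardinality $g+1$), and prove that every admissible $U$ arises as $U(\gamma \cdot \Omega_0, m_0)$ for some $\gamma \in \Sp_{2g}(\mathbb{Z})$. The argument has three pieces: compute how $U$ transforms under the symplectic action, parameterize the outcomes by even theta characteristics, and finish by a counting identity.

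By Proposition \ref{prop:sympaction} the modified Abel-Jacobi map transforms under $\gamma = \left(\begin{smallmatrix} A & B \\ C & D\end{smallmatrix}\right) \in \Sp_{2g}(\mathbb{Z})$ by $\gamma^{-T}$, so Proposition \ref{prop:poorsdescr} gives $U(\gamma \cdot \Omega_0, m_0) \setminus \{\infty\} = \{i : q(\gamma^{-T} v_0^{(i)}) = 1\}$, where $v_0^{(i)} := 2\eta_{\Omega_0, m_0}(\{i, \infty\}) \in \mathbb{F}_2^{2g}$ and $q \colon \mathbb{F}_2^{2g} \to \mathbb{F}_2$ is the parity form $q(v) = v_1^T v_2$. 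A direct calculation exploiting the symplectic relations (namely that $D^T B$ and $C^T A$ are symmetric, and $D^T A - B^T C = I$) yields $q(\gamma^{-T} v) = q(v) + \mathrm{diag}(D^T B) \cdot v_1 + \mathrm{diag}(C^T A) \cdot v_2$ in $\mathbb{F}_2$. Writing $u_\gamma := (\mathrm{diag}(C^T A), \mathrm{diag}(D^T B)) \in \mathbb{F}_2^{2g}$, this reads $q(\gamma^{-T} v) = q(v) + \langle v, u_\gamma \rangle$ for the standard symplectic pairing on $\mathbb{F}_2^{2g}$, and therefore $U(\gamma \cdot \Omega_0, m_0) = U_0 \circ F_{u_\gamma}$ with $F_u := \{i \in \{1, \ldots, 2g+1\} : \langle v_0^{(i)}, u \rangle = 1\}$. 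Moreover $q(u_\gamma) = 0$ by preservation of the Arf invariant, and conversely every $u$ with $q(u) = 0$ arises as some $u_\gamma$: $\Sp_{2g}(\mathbb{F}_2)$ (the image of $\Sp_{2g}(\mathbb{Z}) \twoheadrightarrow \Sp_{2g}(\mathbb{F}_2)$) acts transitively on the set of even quadratic refinements of the symplectic form.

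It then remains to show the map $\Phi \colon \{u \in \mathbb{F}_2^{2g} : q(u) = 0\} \to \{\text{admissible } U\}$, $u \mapsto U_0 \circ F_u$, is bijective. Injectivity reduces to showing $\{v_0^{(i)}\}_{i=1}^{2g+1}$ spans $\mathbb{F}_2^{2g}$: the two-torsion classes $[P_i - P_\infty]$ generate $J(X)[2]$ modulo the single relation $\sum_{i=1}^{2g+1} [P_i - P_\infty] = 0$, which yields $2g$ independent vectors. Surjectivity then follows by counting: the domain has $2^{g-1}(2^g + 1)$ elements (the classical count of even theta characteristics), while the codomain has $\sum_{k \equiv g \pmod 4} \binom{2g+1}{k}$ elements, and a roots-of-unity filter applied to $(1 \pm 1)^{2g+1}$ and $(1 \pm i)^{2g+1}$ shows the two numbers agree. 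An injection between equinumerous finite sets is a bijection, proving the theorem. The main potential obstacle is the parity-transformation calculation together with pinpointing the image of $\gamma \mapsto u_\gamma$; both reduce to standard facts about $\Sp_{2g}(\mathbb{F}_2)$ and the classification of quadratic refinements by Arf invariant.
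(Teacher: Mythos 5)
There is a fundamental mismatch here: what you have written is a proof sketch of Theorem \ref{thm:main} (that every admissible set $U$ with $\#U \equiv g+1 \pmod 4$ and $\infty \in U$ is realized as $U(\gamma\cdot\Omega_0,m_0)$ for some $\gamma \in \Sp_{2g}(\mathbb{Z})$), not of the statement you were asked to prove. Theorem \ref{thm:vanishingtheorem} is the Vanishing Criterion: for a \emph{fixed} $\Omega$ and $m$, it characterizes exactly which two-torsion points $AJ(e_S)$ are zeros of $\vartheta(\cdot,\Omega)$, namely those with $\#(S\circ U(\Omega,m)) \neq g+1$. Nothing in your argument engages with the vanishing of the theta function at the points $AJ(e_S)$, with the theta divisor $\Theta$, or with the combinatorial condition $\#(S\circ U) \neq g+1$; you instead vary $\Omega$ over its $\Sp_{2g}(\mathbb{Z})$-orbit and count the resulting sets $U$. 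A proof of the stated theorem must go through the Riemann Vanishing Theorem (Theorem \ref{thm:vanishing}), which identifies the zero set of $\vartheta$ with a translate $\Theta + e_{T(\Omega,m)}$, and then determine for which $S$ the class $e_S$ lies on that translate; this reduces to deciding when a divisor of the form $e_{S \circ T} + (g-1)P_\infty$ is linearly equivalent to an effective divisor of degree $g-1$, which for a hyperelliptic curve is governed by the cancellation $P_i + P_j \sim 2P_\infty$ at branch points and yields precisely the condition $\#(S\circ U) \neq g+1$. None of that machinery appears in your write-up. (For what it is worth, the paper itself does not reprove this theorem either; it is quoted from Poor, Main Theorem 2.6.1, with the proof deferred to that reference.)

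As a secondary remark: read as a proof of Theorem \ref{thm:main}, your argument is a sensible and somewhat more explicit variant of the paper's. The paper shows that the stabilizer of $U(\Omega,m)$ under the $\Sp_{2g}(\mathbb{Z})$-action is exactly $\Gamma_{1,2}$ and then computes $\#\bigl(\Sp_{2g}(\mathbb{Z})/\Gamma_{1,2}\bigr) = 2^{g-1}(2^g+1)$, matching it against the count of admissible sets; you instead parameterize the orbit directly by the even vectors $u$ with $q(u)=0$ via $U \mapsto U_0 \circ F_u$ and check injectivity using the spanning property of the $\eta_{\Omega,m}(\{i,\infty\})$ (the paper's Proposition \ref{prop:asygetic}). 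That is a legitimate alternative route to the \emph{main} theorem, though you would still need to justify that the image of $\gamma \mapsto u_\gamma$ is all of $\{u : q(u)=0\}$ and that each $U_0 \circ F_u$ is admissible. But it does not prove the Vanishing Criterion.
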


We stress that here we have only stated part of the Vanishing Criterion for hyperelliptic matrices, and that the important part of this Vanishing Criterion for computational purposes is a strengthening of the statement which allows one to give a converse for general curves. This converse then allows the detection of hyperelliptic small period matrices among all small period matrices. We refer the reader to Poor's work \cite{poor}, notably Definition 1.4.11 for a complete account of this converse with proofs, or to \cite{BILV} for a shorter exposition.

\section{The proof}\label{sec:proof}

The proof of Theorem \ref{thm:main} has two main parts. In the first part, for a fixed $g \geq 1$ we count the number of different sets $U$ satisfying $U \subseteq  \{1, 2,\ldots, 2g+1, \infty\}$, $\infty \in U$ and $\# U \equiv g+1 \pmod{4}$ (this is Proposition \ref{prop:Usetcount}). In the second part, we count how many different sets $U(\Omega,m)$ arise as we vary among all possible small period matrices $\Omega$ that can be associated to the Jacobian of a hyperelliptic curve $X$ via the process described in Definition \ref{def:periodmatrix} and all possible markings $m$ of its branch points (this is Proposition \ref{prop:quotientcount}). Since these two numbers are equal, we conclude that every allowable set $U$ must arise $U(\Omega,m)$ for some choice of $\Omega$ and $m$.

\subsection{Counting the allowable sets $U$}

Counting the sets such that $U \subseteq  \{1, 2,\ldots, 2g+1, \infty\}$, $\# U \equiv g+1 \pmod{4}$, and $\infty \in U$ is equivalent to counting the sets satisfying the following two conditions:
\begin{itemize}
\item $\tilde{U} \subseteq  \{1, 2,\ldots, 2g+1\}$, and
\item $\# \tilde{U} \equiv g \pmod{4}$.
\end{itemize}
We turn to this task.

\begin{definition}
Let $n \geq 1$, $d \geq 0$ and $m \geq 2$ be integers. We define the sum
\begin{equation}
S(n,d,m) = \sum_{\substack{0 \leq k \leq n\\ k \equiv d \pmod{m}}} \binom{n}{k}.
\end{equation}
This is the number of subsets of $\{1, \ldots,n\}$ of any cardinality $k \equiv d \pmod{m}$.
\end{definition}

We are interested in computing the quantity $S(2g+1,g,4)$. We first note the following well-known result:
\begin{proposition}\label{prop:evenodd}
Let $n$ be any positive integer, then
\begin{equation}
S(n,0,2) = S(n,1,2) = 2^{n-1}.
\end{equation}
\end{proposition}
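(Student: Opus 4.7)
The plan is to prove this by evaluating $(1+x)^n$ at $x=1$ and $x=-1$, the standard approach. First I would recall that the binomial theorem gives
\begin{equation}
(1+1)^n = \sum_{k=0}^n \binom{n}{k} = 2^n \quad\text{and}\quad (1-1)^n = \sum_{k=0}^n (-1)^k \binom{n}{k} = 0.
\end{equation}
Adding these two identities, the odd-index terms cancel and the even-index terms double, yielding $2 S(n,0,2) = 2^n$, hence $S(n,0,2) = 2^{n-1}$. Subtracting them instead eliminates the even-index terms and doubles the odd ones, giving $2 S(n,1,2) = 2^n$, hence $S(n,1,2) = 2^{n-1}$.

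As an alternative, one could give a purely combinatorial argument: the map sending a subset $T \subseteq \{1,\ldots,n\}$ to its symmetric difference with $\{1\}$ (which is well-defined since $n \geq 1$) is an involution on $2^{\{1,\ldots,n\}}$ that flips the parity of the cardinality. This is a bijection between the even-cardinality and odd-cardinality subsets, so each class has exactly half of the $2^n$ total subsets, giving $2^{n-1}$.

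There is no real obstacle here; the result is elementary and both proofs are a few lines. The only thing to be slightly careful about is the hypothesis $n \geq 1$, which is needed for the combinatorial bijection to make sense (and, in the algebraic version, to ensure $(1-1)^n = 0$ rather than $1$). I would present the algebraic proof since it is the most concise and fits cleanly into the counting framework used in the rest of the section.
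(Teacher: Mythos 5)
Your proof is correct. The paper simply states this proposition as a well-known fact without proof, and your binomial-theorem argument (evaluating $(1+x)^n$ at $x=\pm 1$) is the standard verification; the combinatorial involution is an equally valid alternative, and your remark that $n\geq 1$ is needed is accurate.
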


In other words, for any $n$, of the $2^n$ subsets of $\{1, \ldots,n\}$, half of them have even cardinality, and half have odd cardinality.

\begin{lemma}
We have
\begin{equation}
S(n,d,4) = S(n-1,d,4) + S(n-1,d-1,4).
\end{equation}
\end{lemma}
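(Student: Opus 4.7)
The plan is to expand the defining sum via Pascal's rule $\binom{n}{k}=\binom{n-1}{k}+\binom{n-1}{k-1}$ and then reindex. Concretely, I write
\begin{equation}
S(n,d,4) \;=\; \sum_{\substack{0 \leq k \leq n\\ k \equiv d \,(\mathrm{mod}\,4)}} \binom{n}{k}
\;=\; \sum_{\substack{0 \leq k \leq n\\ k \equiv d \,(\mathrm{mod}\,4)}} \binom{n-1}{k}
\;+\; \sum_{\substack{0 \leq k \leq n\\ k \equiv d \,(\mathrm{mod}\,4)}} \binom{n-1}{k-1}.
\end{equation}
For the first sum, note that the $k=n$ term contributes $\binom{n-1}{n}=0$, so the range $0 \leq k \leq n$ may be replaced by $0 \leq k \leq n-1$; this is precisely $S(n-1,d,4)$. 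For the second sum, I substitute $j = k-1$: as $k$ ranges over $0 \leq k \leq n$ with $k \equiv d \pmod 4$, the index $j$ ranges over $-1 \leq j \leq n-1$ with $j \equiv d-1 \pmod 4$. The term $j=-1$ contributes $\binom{n-1}{-1}=0$, so I may restrict to $0 \leq j \leq n-1$, obtaining $S(n-1,d-1,4)$.

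The only mild subtlety is that if $d=0$ the shifted residue $d-1$ is negative; however, the sum $S(n,d,m)$ depends only on the residue class of $d$ modulo $m$, so $S(n-1,d-1,4)$ is unambiguously $S(n-1,3,4)$ in that case. I would mention this explicitly. No step is genuinely difficult here — the argument is a one-line application of Pascal's identity together with an index shift, and the boundary terms vanish automatically. I do not anticipate any obstacle; the lemma is a bookkeeping step that will presumably feed into an inductive computation of $S(2g+1,g,4)$ in the next proposition.
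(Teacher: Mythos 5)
Your proof is correct and follows exactly the paper's route: the paper's own proof simply cites Pascal's identity with the convention $\binom{n}{k}=0$ for $k<0$, and your write-up just makes the reindexing and vanishing boundary terms explicit. The remark about interpreting $d-1$ modulo $4$ when $d=0$ is a reasonable clarification but does not change the argument.
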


\begin{proof}
This follows from Pascal's identity, which says that for $n \geq 1$ and $k \geq 0$, we have 
\begin{equation}
\binom{n}{k} = \binom{n-1}{k}+\binom{n-1}{k-1}.
\end{equation}
Here we use the usual convention that $\binom{n}{k} =0$ if $k < 0$.
\end{proof}

This is enough to show

\begin{proposition}\label{prop:Usetcount}
Let $g \geq 1$, then
\begin{equation}
S(2g+1,g,4) = 2^{g-1}(2^g+1).
\end{equation}
\end{proposition}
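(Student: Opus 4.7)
The plan is to prove this by induction on $g$, using the recurrence from the previous Lemma together with Proposition \ref{prop:evenodd}. Set $T_g := S(2g+1,g,4)$. Applying the Lemma twice, first to $S(2g+1,g,4)$ and then to each of the two resulting summands, one obtains
$$T_g = S(2g-1,g,4) \;+\; 2\,S(2g-1,g-1,4) \;+\; S(2g-1,g-2,4).$$
The middle term is exactly $T_{g-1}$, so everything reduces to understanding the sum of the two outer terms.

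The key observation is that $g$ and $g-2$ are precisely the two residues mod $4$ that share the parity of $g$. Thus $S(2g-1,g,4)+S(2g-1,g-2,4)$ counts all subsets of $\{1,\ldots,2g-1\}$ whose cardinality has the same parity as $g$, and by Proposition \ref{prop:evenodd} this equals $2^{2g-2}$. Combining, we get the clean recurrence
$$T_g \;=\; 2\,T_{g-1} \;+\; 2^{2g-2}.$$

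The base case $g=1$ is immediate: $T_1 = \binom{3}{1} = 3 = 2^{0}(2^{1}+1)$. For the inductive step, assuming $T_{g-1} = 2^{g-2}(2^{g-1}+1)$, a direct computation gives
$$T_g \;=\; 2 \cdot 2^{g-2}(2^{g-1}+1) + 2^{2g-2} \;=\; 2^{2g-2} + 2^{g-1} + 2^{2g-2} \;=\; 2^{2g-1} + 2^{g-1} \;=\; 2^{g-1}(2^{g}+1),$$
which is the desired formula. No step here is really an obstacle; the only conceptual point is recognizing that, modulo $4$, the pair $\{g,g-2\}$ exhausts one parity class, so that the outer two terms collapse via Proposition \ref{prop:evenodd}. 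An alternative approach via a roots-of-unity filter at $\zeta = i$ (using $(1\pm i)^{2g+1} = 2^g(\pm i)^g(1 \pm i)$) yields the same closed form without induction, but the inductive route fits more naturally with the recurrence the author has just established.
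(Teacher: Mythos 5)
Your proof is correct and follows essentially the same route as the paper: apply the recurrence twice to reduce $S(2g+1,g,4)$ to $S(2g-1,g,4)+2S(2g-1,g-1,4)+S(2g-1,g-2,4)$, collapse the outer terms to $2^{2g-2}$ via Proposition \ref{prop:evenodd}, and induct with base case $g=1$. The remark about the roots-of-unity filter is a nice aside, but the core argument is identical to the paper's.
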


\begin{proof}
The proof is done by induction on $g$. The case of $g=1$ is the claim that $S(3,1,4) = 3$. Indeed, of the subsets of $\{1,2,3\}$, three of them have cardinality congruent to $1$ modulo $4$ (and therefore actually equal to $1$, since there are no subsets of $\{1,2,3\}$ of cardinality greater than or equal to $5$).

We know assume that $S(2g-1,g-1,4) = 2^{g-2}(2^{g-1}+1)$ and $g \geq 2$. We have
\begin{align}
S(2g+1,g,4) & = S(2g,g,4) +S(2g,g-1,4) \\
& = (S(2g-1,g,4) + S(2g-1,g-1,4)) \\
& \qquad + (S(2g-1,g-1,4) + S(2g-1,g-2,4)) \notag \\ 
& = S(2g-1,g,4) + S(2g-1,g-2,4) \\ 
&\qquad + 2S(2g-1,g-1,4). \notag
\end{align}

We now note that if $g$ is even, then
\begin{equation}
S(2g-1,g,4) + S(2g-1,g-2,4) = S(2g-1,0,2),
\end{equation}
and if $g$ is odd, then
\begin{equation}
S(2g-1,g,4) + S(2g-1,g-2,4) = S(2g-1,1,2).
\end{equation}
In either case, by Proposition \ref{prop:evenodd},
\begin{equation}
S(2g-1,g,4) + S(2g-1,g-2,4) = 2^{2g-2}.
\end{equation}
Furthermore, by induction $S(2g-1,g-1,4) = 2^{g-2}(2^{g-1}+1)$.

Therefore we have
\begin{align}
S(2g+1,g,4) & = 2^{2g-2} + 2 \cdot 2^{g-2}(2^{g-1}+1) \\
& = 2^{g-1} (2^{g-1} +2^{g-1} + 1) \\
& = 2^{g-1} (2^g+1).
\end{align}
 This completes the proof.
\end{proof}

\subsection{Counting the different sets $U(\Omega,m)$ for a hyperelliptic curve}

Here we show that in fact, given a hyperelliptic curve $X$ with a marking $m$ of its branch points, every allowable $U$-set is realized as $U(\Omega,m)$ as we vary the small period matrix $\Omega$ associated to its Jacobian $J(X)$ by Definition \ref{def:periodmatrix}. This certainly implies our main theorem. Thus we begin by fixing a marking $m$ on the branch points of $X$.

The proof is carried out by considering the action of $\Sp_{2g}(\mathbb{Z})$ on $\Omega$ and considering which matrices fix the set $U(\Omega,m)$. We will see in Proposition \ref{prop:Gamma12} that they are exactly a subgroup of $\Sp_{2g}(\mathbb{Z})$ denoted $\Gamma_{1,2}$:

\begin{definition}
Let $\Gamma_{1,2}$ be the subgroup of $\Sp_{2g}(\mathbb{Z})$ containing the matrices that fix the parity of every element of $(\frac{1}{2}\mathbb{Z})^{2g}$. In other words, $\gamma \in \Gamma_{1,2}$ if and only if
\begin{equation}
e_*(\gamma \xi) = e_*(\xi)
\end{equation}
for all $\xi \in (\frac{1}{2}\mathbb{Z})^{2g}$, where $e_*$ is as in Definition \ref{def:estar} and $\gamma \xi$ is the usual matrix-vector multiplication. 
\end{definition}

We will need two further characterizations of these matrices below. First, we have:

\begin{proposition}\label{prop:gamma12char1}
Let $\gamma \in \Sp_{2g}(\mathbb{Z})$ with 
\begin{equation}
\gamma = \left(\begin{matrix} A & B \\ C & D \end{matrix}\right)
\end{equation}
where $A$, $B$, $C$ and $D$ are four $g \times g$ matrices. 
Then $\gamma \in \Gamma_{1,2}$ if and only if the diagonals of the matrices $A^TC$ and $B^TD$ have all even entries.
\end{proposition}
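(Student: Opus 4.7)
The plan is to unpack the definition of $\Gamma_{1,2}$ directly and reduce it to an integer-congruence condition. First, I would parametrize $\xi \in (\tfrac{1}{2}\mathbb{Z})^{2g}$ by writing $\xi_1 = a/2$ and $\xi_2 = b/2$ with $a, b \in \mathbb{Z}^g$, so that $e_*(\xi) = \exp(\pi i\, a^T b)$. Since $\gamma \xi$ has upper block $(Aa+Bb)/2$ and lower block $(Ca+Db)/2$, the equation $e_*(\gamma\xi) = e_*(\xi)$ becomes
\begin{equation}
a^T A^T C a + a^T A^T D b + b^T B^T C a + b^T B^T D b \equiv a^T b \pmod{2},
\end{equation}
required to hold for all $a, b \in \mathbb{Z}^g$.

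Next, I would invoke the symplectic relations satisfied by $\gamma$: the matrices $A^T C$ and $B^T D$ are symmetric, and $A^T D - C^T B = \mathds{1}_g$. Substituting $A^T D = \mathds{1}_g + C^T B$ into the cross term gives
\begin{equation}
a^T A^T D b + b^T B^T C a = a^T b + 2\, b^T B^T C a \equiv a^T b \pmod 2,
\end{equation}
where I used that $a^T C^T B b$ is a scalar and hence equals its transpose $b^T B^T C a$. So the congruence simplifies to
\begin{equation}
a^T (A^T C) a + b^T (B^T D) b \equiv 0 \pmod 2 \qquad \text{for all } a, b \in \mathbb{Z}^g.
\end{equation}

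Finally, I would use the elementary observation that for a symmetric integer matrix $M$, the off-diagonal contributions to $a^T M a = \sum_{i,j} M_{ij} a_i a_j$ come in pairs and are therefore even, while $a_i^2 \equiv a_i \pmod 2$, so $a^T M a \equiv \sum_i M_{ii} a_i \pmod 2$. The congruence displayed above must hold for every $a, b \in \mathbb{Z}^g$ independently; choosing $b = 0$ and letting $a$ range over standard basis vectors forces each diagonal entry of $A^T C$ to be even, and symmetrically for $B^T D$. The converse direction is immediate from the same identity, completing the characterization.

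The only real subtlety is the careful bookkeeping of the factor of $4\pi i$ in the definition of $e_*$ (so that equality of parities translates to a congruence mod $2$ rather than mod $1$), together with using the symplectic relations at the correct moment to kill the cross terms. Once those are handled, everything else is a routine parity computation.
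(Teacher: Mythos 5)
Your proof is correct and complete: the parametrization $\xi = (a/2, b/2)$, the use of the symplectic relations $A^TC = C^TA$, $B^TD = D^TB$, $A^TD - C^TB = \mathds{1}_g$ to kill the cross terms, and the reduction $a^TMa \equiv \sum_i M_{ii}a_i \pmod 2$ for symmetric integer $M$ are all handled properly. The paper itself gives no argument here, stating only that the claim ``can be verified directly'' and citing Mumford; your proposal is exactly that direct verification, carried out in full.
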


\begin{proof}
This can be verified directly, or found in \cite[page 189]{mumford1}.
\end{proof}

The second characterization of these matrices relies on 
an important property of the vectors $\eta_{\Omega,m}(\{i, \infty\})$ for $i = 1, 2, \ldots, 2g+1$: 

\begin{proposition}\label{prop:asygetic}
Let $X$ be a marked hyperelliptic curve, $J(X)$ its Jacobian, and $\Omega$ a small period matrix associated to $J(X)$ via the process outlined in Definition \ref{def:periodmatrix}. Furthermore, given this data, let $\eta_{\Omega,m}$ be the map given in Definition \ref{def:etamap}. Then the set
\begin{equation}
\{\eta_{\Omega,m}(\{i, \infty\}) :  i = 1, \ldots, 2g+1\}
\end{equation}
contains a basis of the $\mathbb{F}_2$-vector space $(\frac{1}{2}\mathbb{Z})^{2g}/\mathbb{Z}^{2g}$.
\end{proposition}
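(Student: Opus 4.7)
The plan is to reduce everything to a statement purely inside the group $G_B$, and then transfer it to $(\frac{1}{2}\mathbb{Z})^{2g}/\mathbb{Z}^{2g}$ using the isomorphism $\eta_{\Omega,m}$. Since $\eta_{\Omega,m}$ is an $\mathbb{F}_2$-linear isomorphism between two $2g$-dimensional $\mathbb{F}_2$-vector spaces, it suffices to prove the analogous statement in $G_B$: the $2g+1$ elements $v_i := \{i,\infty\}$ for $i = 1, \ldots, 2g+1$ contain a basis of $G_B$. Note that this already indicates why the statement should be independent of $\Omega$ and $m$: it is really a fact about the combinatorics of $G_B$.

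The proof proceeds in two steps. First, I would show that the $v_i$ \emph{span} $G_B$. For any even-cardinality $S \subseteq B$, if $\infty \notin S$ then $S = \{j_1, \ldots, j_{2k}\}$ and $S = v_{j_1} \circ \cdots \circ v_{j_{2k}}$, since the $2k$ copies of $\infty$ cancel in the symmetric difference; if $\infty \in S$, then an analogous computation writes $S$ as the symmetric difference of the $v_j$ with $j$ running over $S \setminus \{\infty\}$. Second, I would identify the kernel of the surjection $\mathbb{F}_2^{2g+1} \twoheadrightarrow G_B$ sending the $i$th standard basis vector to $v_i$. A direct computation of $v_1 \circ v_2 \circ \cdots \circ v_{2g+1}$ shows that each $i \in \{1,\ldots,2g+1\}$ appears exactly once while $\infty$ appears $2g+1$ times (an odd number), so the result is $B$, which is the identity of $G_B$ under the equivalence $S \sim S^c$. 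Thus the kernel contains the all-ones vector.

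A dimension count then finishes the argument: the surjection has source of dimension $2g+1$ and target of dimension $2g$, so its kernel is one-dimensional and is generated by the all-ones vector. Equivalently, any $2g$ of the $v_i$ are linearly independent and therefore form a basis of $G_B$. Applying the isomorphism $\eta_{\Omega,m}$ transports this basis to a basis of $(\frac{1}{2}\mathbb{Z})^{2g}/\mathbb{Z}^{2g}$ contained in the set of the statement.

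I do not expect a serious obstacle here. The only point requiring care is keeping track of the convention $S \sim S^c$ when computing the full symmetric difference $v_1 \circ \cdots \circ v_{2g+1}$, but this is precisely the mechanism that produces the single relation. Beyond this, the argument is elementary $\mathbb{F}_2$-linear algebra and does not engage at all with the analytic content of $\Omega$ or the particular marking $m$.
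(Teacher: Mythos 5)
Your argument is correct, and it takes a genuinely different route from the paper. The paper simply cites Poor (the proof of Lemma 1.4.13 of \cite{poor}), which shows that the $2g+1$ vectors $\eta_{\Omega,m}(\{i,\infty\})$ form an \emph{azygetic} basis of $(\frac{1}{2}\mathbb{Z})^{2g}/\mathbb{Z}^{2g}$ and in particular span it; spanning immediately gives that the set contains a basis. You instead push everything back through the isomorphism $\eta_{\Omega,m}$ of Definition \ref{def:etamap} and work entirely in $G_B$, where the spanning statement is an elementary computation with symmetric differences: every even-cardinality class is a product of the $v_i = \{i,\infty\}$ (the copies of $\infty$ cancelling or surviving according to parity), and the unique relation $v_1 \circ \cdots \circ v_{2g+1} = B \sim \emptyset$ accounts exactly for the dimension drop from $2g+1$ to $2g$, so any $2g$ of the $v_i$ already form a basis. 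This is a valid and in fact more self-contained argument, since it uses only facts already stated in the paper (that $\eta_{\Omega,m}$ is a group isomorphism between elementary abelian $2$-groups, hence $\mathbb{F}_2$-linear) rather than an external lemma; what it does not recover is the azygetic property itself, which the paper's citation carries along for free but which this proposition does not need. Your stronger conclusion --- that \emph{any} $2g$ of the images form a basis, not merely that some basis is contained in the set --- is a small bonus over the stated claim.
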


\begin{proof}
By the proof Lemma 1.4.13 of \cite{poor}, the set 
\begin{equation}
\{\eta_{\Omega,m}(\{i, \infty\}) :  i = 1, \ldots, 2g+1\}
\end{equation}
is an azygetic basis of $(\frac{1}{2}\mathbb{Z})^{2g}/\mathbb{Z}^{2g}$, and by Definition 1.4.12 of \emph{ibid}, therefore spans the vector space $(\frac{1}{2}\mathbb{Z})^{2g}/\mathbb{Z}^{2g}$. Therefore it contains a basis of the space.
\end{proof}

We can now prove the following:

\begin{lemma}\label{lem:Gamma12}
A matrix $\gamma \in \Sp_{2g}(\mathbb{Z})$ belongs to $\Gamma_{1,2}$ if and only if it fixes the parity of $\eta_{\Omega,m}(\{i, \infty\})$ for $i = 1, 2, \ldots, 2g+1$.
\end{lemma}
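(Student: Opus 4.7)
The forward direction is immediate: by definition of $\Gamma_{1,2}$, any $\gamma \in \Gamma_{1,2}$ fixes $e_*(\xi)$ for every $\xi \in (\tfrac{1}{2}\mathbb{Z})^{2g}/\mathbb{Z}^{2g}$, so in particular for $\xi = \eta_{\Omega,m}(\{i,\infty\})$.

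For the converse, the plan is to exploit the fact that $e_*$ is (the exponential of) a quadratic form on the $\mathbb{F}_2$-vector space $V := (\tfrac{1}{2}\mathbb{Z})^{2g}/\mathbb{Z}^{2g}$ whose associated bilinear form is invariant under $\Sp_{2g}(\mathbb{Z})$. Concretely, writing $\xi = (a/2, b/2)$ and $\eta = (c/2, d/2)$ with $a,b,c,d \in \mathbb{Z}^g$, a direct computation yields
\begin{equation}
e_*(\xi + \eta) = e_*(\xi)\, e_*(\eta)\, (-1)^{a^Td + c^Tb}.
\end{equation}
Define the auxiliary symbol $b(\xi,\eta) = (-1)^{a^Td + c^Tb}$; this is a well-defined bilinear pairing on $V$. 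Since $\gamma \in \Sp_{2g}(\mathbb{Z})$ preserves the integral symplectic pairing, it preserves its reduction mod $2$, which is exactly $b(\cdot,\cdot)$. Thus $b(\gamma\xi,\gamma\eta) = b(\xi,\eta)$ for all $\xi,\eta \in V$, without any additional hypothesis on $\gamma$.

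Now assume $e_*(\gamma \xi_i) = e_*(\xi_i)$ for $\xi_i := \eta_{\Omega,m}(\{i,\infty\})$, $i = 1, \ldots, 2g+1$. By Proposition \ref{prop:asygetic}, the set $\{\xi_1, \ldots, \xi_{2g+1}\}$ contains an $\mathbb{F}_2$-basis of $V$; relabeling if necessary, take this basis to be $\xi_{i_1}, \ldots, \xi_{i_{2g}}$. Given an arbitrary $\xi \in V$, write $\xi = \sum_{j \in J} \xi_{i_j}$ for some subset $J \subseteq \{1,\ldots,2g\}$. Iterating the displayed identity above gives
\begin{equation}
e_*(\xi) = \prod_{j \in J} e_*(\xi_{i_j}) \cdot \prod_{\substack{j,k \in J \\ j<k}} b(\xi_{i_j}, \xi_{i_k}),
\end{equation}
and similarly for $e_*(\gamma \xi)$ with each $\xi_{i_j}$ replaced by $\gamma \xi_{i_j}$. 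Because $\gamma$ fixes every individual factor on the right (the $e_*$ terms by hypothesis, the $b$ terms by the symplectic invariance above), we conclude $e_*(\gamma \xi) = e_*(\xi)$ for every $\xi \in V$, so $\gamma \in \Gamma_{1,2}$.

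The only genuinely delicate point is the bilinear identity for $e_*$ and the verification that the pairing $b$ is precisely the symplectic pairing mod $2$; everything else is a formal consequence. I do not anticipate serious obstacles, since the needed input about azygetic bases is black-boxed via Proposition \ref{prop:asygetic}.
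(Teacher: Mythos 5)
Your proposal is correct and follows essentially the same route as the paper: both arguments rest on the identity $e_*(\xi+\zeta)=e_*(\xi)e_*(\zeta)\,(-1)^{\langle 2\xi,2\zeta\rangle}$ exhibiting $e_*$ as a quadratic form whose polar bilinear form is the mod-$2$ reduction of the symplectic pairing (hence $\Sp_{2g}(\mathbb{Z})$-invariant), combined with Proposition \ref{prop:asygetic} to propagate parity-preservation from the given vectors to all of $(\tfrac{1}{2}\mathbb{Z})^{2g}/\mathbb{Z}^{2g}$. The only cosmetic difference is that you phrase the extension step via an explicit iterated product formula over a basis, where the paper argues additively with $q(\xi)=\xi_1^T\xi_2$ that the set of parity-preserved vectors is closed under addition.
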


\begin{proof}
It is clear that if $\gamma \in \Gamma_{1,2}$, then it will fix the parity of $\eta_{\Omega,m}(\{i, \infty\})$ for $i = 1, 2, \ldots, 2g+1$. Therefore we assume that $\gamma \in \Sp_{2g}(\mathbb{Z})$ fixes the parity of $\eta_{\Omega,m}(\{i, \infty\})$ for $i = 1, 2, \ldots, 2g+1$ and show that $\gamma \in \Gamma_{1,2}$.

We first establish some notation: For $\xi \in (\frac{1}{2}\mathbb{Z})^{2g}$, let
\begin{equation}
q(\xi) = \xi_1^T\xi_2
\end{equation}
be the quadratic form associated to the parity function $e_*$ defined in Definition \ref{def:estar}. We note that 
\begin{equation}
q(\xi) \equiv q(\zeta) \pmod{(\frac{1}{2}\mathbb{Z})^{2g}},
\end{equation}
if and only if
\begin{equation}
e_*(\xi) = e_*(\zeta).
\end{equation}

Let also 
\begin{equation}
b(\xi,\zeta) = \xi^T J \zeta,
\end{equation}
be the bilinear form associated to the matrix $J$, where as before 
\begin{equation}
J = \left(\begin{smallmatrix} 0 & \mathds{1}_g \\  -\mathds{1}_g & 0 \end{smallmatrix}\right),
\end{equation}
and $\mathds{1}_g$ is the $g \times g$ identity matrix. 

A quick computation shows that for any $\xi,\zeta \in (\frac{1}{2}\mathbb{Z})^{2g}$
\begin{equation}
q(\xi+\zeta) \equiv  q(\xi) + q(\zeta) + b(\xi, \zeta) \pmod{(\frac{1}{2}\mathbb{Z})^{2g}}.
\end{equation}

Now let $\gamma \in \Sp_{2g}(\mathbb{Z})$. We have then that 
\begin{equation}
b(\gamma \xi, \gamma \zeta ) = b(\xi, \zeta),
\end{equation}
by definition of $b$ and $\Sp_{2g}(\mathbb{Z})$. Therefore, for any $\xi,\zeta \in (\frac{1}{2}\mathbb{Z})^{2g}$
\begin{align}
q(\gamma(\xi+\zeta)) = q(\gamma\xi + \gamma\zeta) & \equiv  q(\gamma\xi) + q(\gamma\zeta) + b(\gamma\xi, \gamma\zeta) \pmod{(\frac{1}{2}\mathbb{Z})^{2g}} \\
& \equiv  q(\gamma\xi) + q(\gamma\zeta) + b(\xi, \zeta) \pmod{(\frac{1}{2}\mathbb{Z})^{2g}}.
\end{align}
As a result, if
\begin{equation}
q(\gamma\xi) \equiv q(\xi) \pmod{(\frac{1}{2}\mathbb{Z})^{2g}}
\end{equation}
and
\begin{equation}
q(\gamma\zeta) \equiv q(\zeta) \pmod{(\frac{1}{2}\mathbb{Z})^{2g}},
\end{equation}
then
\begin{equation}
q(\gamma(\xi+\zeta)) \equiv q(\xi+ \zeta) \pmod{(\frac{1}{2}\mathbb{Z})^{2g}}.
\end{equation}

From this discussion we conclude that if $e_*(\gamma\xi) = e_*(\xi)$ and $e_*(\gamma\zeta) = e_*(\zeta)$, it follows that
\begin{equation}
e_*(\gamma(\xi+\zeta)) = e_*(\xi+\zeta).
\end{equation}

The result now follows from the fact that the set
\begin{equation}
\{\eta_{\Omega,m}(\{i, \infty\}) :  i = 1, \ldots, 2g+1\}
\end{equation}
contains a basis of the $\mathbb{F}_2$-vector space $(\frac{1}{2}\mathbb{Z})^{2g}/\mathbb{Z}^{2g}$ by Proposition \ref{prop:asygetic}. Therefore if a matrix $\gamma$ fixes the parity of each element of this basis, it must fix the parity of each element of the vector space.
\end{proof}

We are now in a position to show:

\begin{proposition}\label{prop:Gamma12}
Let $X$ be a marked hyperelliptic curve, $J(X)$ its Jacobian, and $\Omega$ a small period matrix associated to $J(X)$ via the process outlined in Definition \ref{def:periodmatrix}. Furthermore, given this data, let $\eta_{\Omega,m}$ be the map given in Definition \ref{def:etamap} and $U(\Omega,m)$ be the set defined in Definition \ref{def:Uset}.

Let $\gamma \in \Sp_{2g}(\mathbb{Z})$. Then the matrix $\gamma \cdot \Omega$ is another small period matrix for $J(X)$, to which we may similarly attach a map $\eta_{\gamma \cdot \Omega,m}$ and a set $U(\gamma \cdot \Omega,m)$.

In that case, we have
\begin{equation}
U(\gamma \cdot \Omega,m) = U(\Omega,m)
\end{equation}
if and only if
\begin{equation}
\gamma \in \Gamma_{1,2}.
\end{equation}
\end{proposition}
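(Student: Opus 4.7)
The plan is to reduce the statement to Lemma \ref{lem:Gamma12} using Poor's combinatorial description of $U(\Omega,m)$ from Proposition \ref{prop:poorsdescr} together with the symplectic transformation law for the modified Abel--Jacobi map.

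First I would use Proposition \ref{prop:poorsdescr} to express $U(\Omega,m)$ entirely in terms of the parities $e_*(\eta_{\Omega,m}(\{i,\infty\}))$ for $i = 1, \ldots, 2g+1$. Since both $U(\Omega,m)$ and $U(\gamma \cdot \Omega, m)$ are normalized to contain $\infty$, the two sets coincide if and only if all $2g+1$ of these parities agree with their counterparts $e_*(\eta_{\gamma \cdot \Omega, m}(\{i,\infty\}))$. Next, by Proposition \ref{prop:sympaction}, passing from $\Omega$ to $\gamma \cdot \Omega$ replaces $AJ_c$ by $\gamma^{-T} AJ_c$; because the isomorphism $G_B \to J(X)[2]$ of Proposition \ref{prop:GBisom} does not depend on $\Omega$, this yields $\eta_{\gamma \cdot \Omega, m} = \gamma^{-T}\, \eta_{\Omega,m}$. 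Consequently, $U(\gamma \cdot \Omega, m) = U(\Omega,m)$ is equivalent to $\gamma^{-T}$ fixing the parity of each $\eta_{\Omega,m}(\{i,\infty\})$, which by Lemma \ref{lem:Gamma12} is in turn equivalent to $\gamma^{-T} \in \Gamma_{1,2}$.

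The remaining step, which I view as the main (though essentially computational) obstacle, is to verify that the involution $\gamma \mapsto \gamma^{-T}$ preserves $\Gamma_{1,2}$, so that $\gamma^{-T} \in \Gamma_{1,2}$ if and only if $\gamma \in \Gamma_{1,2}$. Writing $\gamma = \left(\begin{smallmatrix} A & B \\ C & D \end{smallmatrix}\right) \in \Sp_{2g}(\mathbb{Z})$, the standard formula for the inverse of a symplectic matrix gives $\gamma^{-T} = \left(\begin{smallmatrix} D & -C \\ -B & A \end{smallmatrix}\right)$. Applying the criterion of Proposition \ref{prop:gamma12char1} to $\gamma^{-T}$ then demands that the diagonals of $-D^T B$ and $-C^T A$ be even. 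Since the symplectic relations force $B^T D$ and $A^T C$ to be symmetric, these diagonals coincide up to sign with the diagonals of $B^T D$ and $A^T C$, so the condition defining $\Gamma_{1,2}$ is identical for $\gamma$ and for $\gamma^{-T}$. The principal care needed is in tracking signs and transposes through this final matrix computation and in correctly interpreting the left action of $\Sp_{2g}(\mathbb{Z})$ used to define $\gamma \cdot \Omega$, after which the proposition follows by chaining the equivalences above.
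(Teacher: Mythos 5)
Your proposal is correct and follows essentially the same route as the paper: reduce via Proposition \ref{prop:poorsdescr} and the transformation law of Proposition \ref{prop:sympaction} to the condition that $\gamma^{-T}$ preserves the parities of the $\eta_{\Omega,m}(\{i,\infty\})$, apply Lemma \ref{lem:Gamma12}, and then check via Proposition \ref{prop:gamma12char1} and the block formula for $\gamma^{-T}$ that $\Gamma_{1,2}$ is stable under $\gamma \mapsto \gamma^{-T}$. The only cosmetic difference is that you invoke symmetry of $A^TC$ and $B^TD$ in the last step, whereas the paper simply observes that a matrix and its transpose have the same diagonal; both are valid.
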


\begin{proof}
Recall from Proposition \ref{prop:poorsdescr} that $U(\Omega,m)$ can be described as the set
\begin{equation}
\{i \in  \{1, 2, \ldots, 2g+1\} : e_*(\eta_{\Omega,m}(\{i, \infty\})) = -1 \} \cup \{\infty \}.
\end{equation}
Since $\eta_{\Omega,m}(\{i, \infty\}) \in (\frac{1}{2}\mathbb{Z})^{2g}/\mathbb{Z}^{2g}$ is none other than $AJ_c(e_{\{i, \infty\}})$, by Proposition \ref{prop:sympaction}, we have
\begin{equation}
\eta_{\gamma \cdot \Omega,m}(\{i, \infty\}) = \gamma^{-T} \eta_{\Omega,m}(\{i, \infty\}),
\end{equation}

Therefore we have that
\begin{equation}
U(\gamma \cdot \Omega,m) = U(\Omega,m)
\end{equation}
if and only if multiplication by $\gamma^{-T}$ does not change the parity of any $\eta_{\Omega,m}(\{i, \infty\})$ for $i = 1, 2, \ldots, 2g+1$. By Lemma \ref{lem:Gamma12}, this is the case if and only if $\gamma^{-T} \in \Gamma_{1,2}$.

To finish the proof we must show that $\gamma^{-T} \in \Gamma_{1,2}$ if and only if $\gamma \in \Gamma_{1,2}$. Note that since $\gamma \in \Sp_{2g}(\mathbb{Z})$, we have
\begin{equation}
\gamma^{-T} = \left(\begin{matrix} D & -C \\ -B & A \end{matrix} \right).
\end{equation}
By Proposition \ref{prop:gamma12char1}, it suffices thus to show that the diagonals of the matrices
\begin{equation}
D^T(-B) = (-B^T D)^T
\end{equation}
and
\begin{equation}
(-C)^T A = (-A^T C)^T
\end{equation}
have all even entries if and only if the diagonals of the matrices $A^TC$ and $B^TD$ have all even entries, which is true.
\end{proof}

As a direct consequence we now have:

\begin{theorem}
The number of different sets $U(\Omega,m)$ that arise, as $\Omega$ varies over all small period matrices that can be attached to the polarized Jacobian of a marked hyperelliptic curve $X$ with the process outlined in Definition \ref{def:periodmatrix}, is equal to the cardinality of the quotient group
\begin{equation}
\Sp_{2g}(\mathbb{Z})/\Gamma_{1,2}.
\end{equation}
\end{theorem}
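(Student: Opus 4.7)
The plan is to realize the sets $U(\Omega,m)$ as orbit data under the $\Sp_{2g}(\mathbb{Z})$-action on $\mathbb{H}_g$, and then to read off the count directly from the stabilizer calculation already carried out in Proposition~\ref{prop:Gamma12}. The marking $m$ is fixed throughout.

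First I would pick a base point: any small period matrix $\Omega_0$ attached to $J(X)$ by Definition~\ref{def:periodmatrix}. Using the fact recalled in the discussion after Definition~\ref{def:periodmatrix} --- namely, that two elements of $\mathbb{H}_g$ give isomorphic principally polarized abelian varieties if and only if they lie in a common $\Sp_{2g}(\mathbb{Z})$-orbit under the action \eqref{def:sympaction} --- every small period matrix for $J(X)$ can be written as $\gamma \cdot \Omega_0$ for some $\gamma \in \Sp_{2g}(\mathbb{Z})$. Consequently the assignment
\begin{equation*}
\Phi \colon \Sp_{2g}(\mathbb{Z}) \longrightarrow \{U(\Omega,m) : \Omega \text{ a small period matrix for } J(X)\}, \qquad \gamma \longmapsto U(\gamma \cdot \Omega_0, m),
\end{equation*}
is surjective onto the set whose cardinality we wish to determine.

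The substantive step is then to identify the fibers of $\Phi$. Given $\gamma_1, \gamma_2 \in \Sp_{2g}(\mathbb{Z})$, set $\Omega' = \gamma_2 \cdot \Omega_0$; since \eqref{def:sympaction} defines a left group action, $\gamma_1 \cdot \Omega_0 = (\gamma_1 \gamma_2^{-1}) \cdot \Omega'$. Applying Proposition~\ref{prop:Gamma12} with base point $\Omega'$ yields that $\Phi(\gamma_1) = \Phi(\gamma_2)$ if and only if $\gamma_1 \gamma_2^{-1} \in \Gamma_{1,2}$, which is precisely the condition for $\gamma_1$ and $\gamma_2$ to lie in the same left coset of $\Gamma_{1,2}$ in $\Sp_{2g}(\mathbb{Z})$. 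Hence $\Phi$ descends to a bijection from the coset space $\Gamma_{1,2} \backslash \Sp_{2g}(\mathbb{Z})$ onto the set of distinct $U(\Omega,m)$, and the theorem follows because this coset space has the same cardinality as $\Sp_{2g}(\mathbb{Z})/\Gamma_{1,2}$.

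I do not expect any real obstacle here: all the essential content has already been absorbed into Proposition~\ref{prop:Gamma12}, and what remains is a standard orbit-stabilizer bookkeeping. The one subtlety worth flagging is that $\Gamma_{1,2}$ need not be a normal subgroup of $\Sp_{2g}(\mathbb{Z})$, so the notation $\Sp_{2g}(\mathbb{Z})/\Gamma_{1,2}$ in the statement should be read as a set of cosets rather than a literal quotient group; this has no bearing on the cardinality assertion.
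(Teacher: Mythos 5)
Your proof is correct and follows essentially the same route as the paper's: transitivity of the $\Sp_{2g}(\mathbb{Z})$-action on the set of small period matrices of $J(X)$ combined with Proposition \ref{prop:Gamma12}. Your write-up is in fact more careful than the paper's one-line argument, since you explicitly identify the fibers of $\gamma \mapsto U(\gamma\cdot\Omega_0,m)$ as cosets of $\Gamma_{1,2}$ by applying Proposition \ref{prop:Gamma12} at the moved base point, and you correctly note that $\Gamma_{1,2}$ need not be normal, so that $\Sp_{2g}(\mathbb{Z})/\Gamma_{1,2}$ should be read as a coset space.
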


\begin{proof}
As described in Section \ref{sec:preliminaries}, the group $\Sp_{2g}(\mathbb{Z})$ acts transitively on the set of small period matrices that can be associated to $J(X)$ via the process described in Definition \ref{def:periodmatrix}. This action changes $U(\Omega,m)$ if and only if $\gamma \in \Gamma_{1,2}$ by Proposition \ref{prop:Gamma12}, which completes the proof.
\end{proof}

We now compute the cardinality of this quotient group, which will give us the number of different sets $U(\Omega,m)$ attached to a fixed hyperelliptic curve $X$ with a marking of its branch points $m$ as $\Omega$ is allowed to vary over all possible small period matrices that can be associated to its Jacobian $J(X)$ via the process described in Definition \ref{def:periodmatrix}.

\begin{proposition}\label{prop:quotientcount}
We have that
\begin{equation}
\# \Sp_{2g}(\mathbb{Z})/\Gamma_{1,2} = 2^{g-1}(2^g+1).
\end{equation}
\end{proposition}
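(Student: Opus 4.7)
The strategy is to reduce the computation to a well-known orbit count for the finite symplectic group $\Sp_{2g}(\mathbb{F}_2)$ acting on quadratic forms.

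First, I would observe that $\Gamma_{1,2}$ contains the level-$2$ principal congruence subgroup $\Gamma(2) = \ker\bigl(\Sp_{2g}(\mathbb{Z}) \to \Sp_{2g}(\mathbb{F}_2)\bigr)$. Indeed, if $\gamma \equiv \mathds{1}_{2g} \pmod 2$, then $\gamma \xi \equiv \xi \pmod{\mathbb{Z}^{2g}}$ for every $\xi \in (\tfrac{1}{2}\mathbb{Z})^{2g}$, so $e_*(\gamma\xi) = e_*(\xi)$ trivially. Citing the classical fact that the reduction map $\Sp_{2g}(\mathbb{Z}) \to \Sp_{2g}(\mathbb{F}_2)$ is surjective, it follows that
\begin{equation}
\Sp_{2g}(\mathbb{Z})/\Gamma_{1,2} \;\cong\; \Sp_{2g}(\mathbb{F}_2)/\overline{\Gamma}_{1,2},
\end{equation}
where $\overline{\Gamma}_{1,2}$ is the image of $\Gamma_{1,2}$ under reduction mod $2$.

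Next, I would translate the parity condition into the language of $\mathbb{F}_2$-quadratic forms. Under the natural identification $(\tfrac{1}{2}\mathbb{Z})^{2g}/\mathbb{Z}^{2g} \cong \mathbb{F}_2^{2g}$, writing $\xi = \tfrac{1}{2}\tilde\xi$, the parity becomes $e_*(\xi) = (-1)^{\tilde\xi_1^T \tilde\xi_2}$, i.e.\ the $\pm 1$-valuation of the quadratic form $Q_0 \colon \mathbb{F}_2^{2g} \to \mathbb{F}_2$ defined by $Q_0(v_1,v_2) = v_1^T v_2$. The bilinear form polarizing $Q_0$ is precisely the standard symplectic form preserved by $\Sp_{2g}(\mathbb{F}_2)$, so $\Sp_{2g}(\mathbb{F}_2)$ acts on the (finite) set $\mathcal{Q}$ of all quadratic forms on $\mathbb{F}_2^{2g}$ polarizing to this symplectic form. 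By the very definition of $\Gamma_{1,2}$, the subgroup $\overline{\Gamma}_{1,2}$ is the stabilizer of $Q_0$ under this action.

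Finally, I would apply the standard classification of quadratic forms over $\mathbb{F}_2$ with a fixed (non-degenerate) polarization: such forms split into two orbits under $\Sp_{2g}(\mathbb{F}_2)$ distinguished by Arf invariant, and the orbit of the split ``plus-type'' form $Q_0$ has cardinality $2^{g-1}(2^g+1)$ — equivalently, $Q_0$ has exactly $2^{g-1}(2^g+1)$ zeros on $\mathbb{F}_2^{2g}$, and the number of quadratic forms with a given polarization and Arf invariant equals the number of zeros of any one of them. The orbit-stabilizer theorem then gives
\begin{equation}
[\Sp_{2g}(\mathbb{F}_2) : \overline{\Gamma}_{1,2}] \;=\; 2^{g-1}(2^g+1),
\end{equation}
which combined with the isomorphism above yields the claimed count.

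The main obstacle is less a matter of mathematical difficulty than of what to invoke as ``standard'': the surjectivity of $\Sp_{2g}(\mathbb{Z}) \to \Sp_{2g}(\mathbb{F}_2)$ and the orbit count for plus-type quadratic forms are both classical but non-trivial. A self-contained treatment of the orbit count would proceed by noting that any two quadratic forms polarizing to the same symplectic form differ by a linear functional, so $\mathcal{Q}$ is a torsor under $(\mathbb{F}_2^{2g})^\vee$ of size $2^{2g}$; then one verifies that the Arf invariant is $\Sp_{2g}(\mathbb{F}_2)$-invariant and computes the cardinality of each fibre by induction on $g$, using a hyperbolic-plane decomposition.
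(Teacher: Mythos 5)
Your proof is correct, and it diverges from the paper's in the final step. Both arguments begin the same way: reduce modulo $2$ using $\Gamma(2)\subseteq\Gamma_{1,2}$ and surjectivity of $\Sp_{2g}(\mathbb{Z})\to\Sp_{2g}(\mathbb{F}_2)$, and identify the image of $\Gamma_{1,2}$ with the group preserving the quadratic form $Q_0(v_1,v_2)=v_1^Tv_2$ on $\mathbb{F}_2^{2g}$ (the paper's $\SO_{2g}(\mathbb{F}_2,+1)$, your stabilizer $\overline{\Gamma}_{1,2}$). The paper then finishes by quoting explicit order formulas for $\Sp_{2g}(\mathbb{F}_2)$ and $\SO_{2g}(\mathbb{F}_2,+1)$ from the literature and dividing; you instead apply orbit--stabilizer, identifying the index with the size of the $\Sp_{2g}(\mathbb{F}_2)$-orbit of $Q_0$ among quadratic forms polarizing the standard symplectic form, i.e.\ the Arf-invariant-$0$ class. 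Your computation of that orbit's size is a nice touch: since the forms with fixed polarization are the $Q_v(x)=Q_0(x)+b(v,x)$ and $\mathrm{Arf}(Q_v)=\mathrm{Arf}(Q_0)+Q_0(v)$, the count equals the number of zeros of $Q_0$, namely $2^{g-1}(2^g+1)$. What your route buys is independence from the order formulas and a conceptual explanation of why the answer is exactly the zero count of the split form; what it costs is that the entire weight of the argument now rests on the transitivity of $\Sp_{2g}(\mathbb{F}_2)$ on each Arf class, which you correctly flag as classical but should cite or prove (e.g.\ via Witt's theorem in characteristic $2$ or a hyperbolic-plane induction) with the same care the paper devotes to its references for the group orders. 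Also note that $\Gamma_{1,2}$ is not normal in $\Sp_{2g}(\mathbb{Z})$, so both you and the paper are really counting cosets rather than elements of a quotient group; your phrasing handles this correctly.
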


\begin{proof}
To compute the cardinality of this quotient group, we use the following facts: First, by the Third Group Isomorphism Theorem,
\begin{equation}
\Sp_{2g}(\mathbb{Z})/\Gamma_{1,2} \cong \frac{\Sp_{2g}(\mathbb{Z})/\Gamma(2)}{\Gamma_{1,2}/\Gamma(2)},
\end{equation}
where 
\begin{equation}
\Gamma(2) = \left\{ \gamma \in \Sp_{2g}(\mathbb{Z}) : \gamma \equiv \mathds{1}_{2g} \pmod{2}\right\}.
\end{equation}
Furthermore, we have
\begin{equation}
\Sp_{2g}(\mathbb{Z})/\Gamma(2) \cong \Sp_{2g}(\mathbb{F}_2),
\end{equation}
where $\Sp_{2g}(\mathbb{F}_2)$ is the group of matrices with coefficients in $\mathbb{F}_2$ and symplectic with respect to the bilinear form given by the matrix
\begin{equation}
\begin{pmatrix}
0 & \mathds{1}_g \\ \mathds{1}_g & 0
\end{pmatrix},
\end{equation}
and
\begin{equation}
\Gamma_{1,2}/\Gamma(2) \cong \SO_{2g}(\mathbb{F}_2,+1),
\end{equation}
where $\SO_{2g}(\mathbb{F}_2,+1)$ is the special orthogonal group of matrices with entries in $\mathbb{F}_2$ and preserving the quadratic form 
\begin{equation}
Q(x_1,x_2, \ldots, x_{2g-1},x_{2g}) = \sum_{i=1}^{g} x_i x_{g+i}.
\end{equation}
(These last two facts are implicit in the discussion in \cite{mumford1}, Appendix to Chapter 5.) 

There are therefore
\begin{equation}
\frac{\#  \Sp_{2g}(\mathbb{F}_2)}{\# \SO_{2g}(\mathbb{F}_2,+1)}
\end{equation}
different sets $U(\Omega,m)$ as $\Omega$ varies over all small period matrices that can be associated to $J(X)$ via the process described in Definition \ref{def:periodmatrix}. 

We have
\begin{equation}
\#  \Sp_{2g}(\mathbb{F}_2) = 2^{g^2} \prod_{i=1}^{g}(2^{2i}-1),
\end{equation}
(see for example \cite[Theorem 3.12]{grove}) and
\begin{equation}
\# \SO_{2g}(\mathbb{F}_2,+1) = 2 \cdot 2^{g(g-1)}(2^g-1)\prod_{i=1}^{g-1}(2^{2i}-1),
\end{equation}
(see for example \cite[Table 2.1C]{kleidman}).
Computing the quotient gives the result we sought.

\end{proof}

\bibliography{bibliography} {}

\begin{thebibliography}{Mum07b}

\bibitem[BILV16]{BILV}
Jennifer~S. Balakrishnan, Sorina Ionica, Kristin Lauter, and Christelle
  Vincent.
\newblock Constructing genus-3 hyperelliptic {J}acobians with {CM}.
\newblock {\em London Mathematical Society Journal of Computation and
  Mathematics}, 19(suppl. A):283--300, 2016.

\bibitem[BL04]{Birkenhake}
Christina Birkenhake and Herbert Lange.
\newblock {\em Complex abelian varieties}, volume 302 of {\em Grundlehren der
  Mathematischen Wissenschaften}.
\newblock Springer-Verlag, Berlin, second edition, 2004.

\bibitem[Gro02]{grove}
Larry~C. Grove.
\newblock {\em Classical groups and geometric algebra}, volume~39 of {\em
  Graduate Studies in Mathematics}.
\newblock American Mathematical Society, Providence, RI, 2002.

\bibitem[KL90]{kleidman}
Peter Kleidman and Martin Liebeck.
\newblock {\em The subgroup structure of the finite classical groups}, volume
  129 of {\em London Mathematical Society Lecture Note Series}.
\newblock Cambridge University Press, 1990.

\bibitem[Mum07a]{mumford1}
David Mumford.
\newblock {\em Tata lectures on theta. {I}}.
\newblock Modern Birkh{\"a}user Classics. Birkh{\"a}user, Boston, MA, 2007.

\bibitem[Mum07b]{mumford2}
David Mumford.
\newblock {\em Tata lectures on theta. {II}}.
\newblock Modern Birkh{\"a}user Classics. Birkh{\"a}user, Boston, MA, 2007.

\bibitem[Poo94]{poor}
Cris Poor.
\newblock The hyperelliptic locus.
\newblock {\em Duke Mathematical Journal}, 76(3):809--884, 1994.

\end{thebibliography}
\bibliographystyle{alpha}

\end{document}